\numberwithin{equation}{section}
\numberwithin{figure}{section}
\theoremstyle{plain}
\newtheorem{thm}{\protect\theoremname}[section]
  \theoremstyle{plain}
  \newtheorem{cor}[thm]{\protect\corollaryname}
  \theoremstyle{definition}
  \newtheorem{defn}[thm]{\protect\definitionname}
  \theoremstyle{plain}
  \newtheorem{lem}[thm]{\protect\lemmaname}
  \theoremstyle{plain}
  \newtheorem{prop}[thm]{\protect\propositionname}
  \providecommand{\corollaryname}{Corollary}
  \providecommand{\definitionname}{Definition}
  \providecommand{\lemmaname}{Lemma}
  \providecommand{\propositionname}{Proposition}
\providecommand{\theoremname}{Theorem}
\begin{document}

\title{A self-similar measure with dense rotations, singular projections
and discrete slices}

\author{Ariel Rapaport}

\date{April 24, 2017}

\subjclass[2000]{Primary: 28A80, Secondary: 28A78.}

\keywords{Self-similar measure, singular measure, dimension conservation.}

\thanks{Supported by ERC grant 306494}
\begin{abstract}
We construct a planar homogeneous self-similar measure, with strong
separation, dense rotations and dimension greater than $1$, such
that there exist lines for which dimension conservation does not hold
and the projection of the measure is singular. In fact, the set of
such directions is residual and the typical slices of the measure,
perpendicular to these directions, are discrete.
\end{abstract}

\maketitle

\section{Introduction and statement of results}

Let $R$ be a $2\times2$ rotation matrix, with $R^{n}\ne Id$ for
all $n\ge1$, and let $r\in(0,1)$. Consider a homogeneous IFS on
$\mathbb{R}^{2}$
\[
\{\varphi_{i}(x)=rRx+a_{i}\}_{i\in I},
\]
with the strong separation condition (SSC), and a self-similar measure
\[
\mu=\sum_{i\in I}p_{i}\cdot\varphi_{i}\mu\:.
\]
It is among the most basic planar self-similar measures. Hence it
is a natural question in fractal geometry to study the dimension and
continuity of the projections $\{P_{u}\mu\}_{u\in S}$ and slices
\[
\{\{\mu_{u,x}\}_{x\in\mathbb{R}^{2}}\::\:u\in S\}\:.
\]
Here $S$ is the unit circle of $\mathbb{R}^{2}$, $P_{u}$ is the
orthogonal projection onto the line spanned by $u$, and $\{\mu_{u,x}\}_{x\in\mathbb{R}^{2}}$
is the disintegration of $\mu$ with respect to $P_{u}^{-1}(\mathcal{B})$,
where $\mathcal{B}$ is the Borel $\sigma$-algebra of $\mathbb{R}^{2}$.
A more elaborate description of these disintegrations is given in
Section \ref{S2}.

Dimensionwise, the behaviour of the projections is as regular as possible.
Indeed, Hochman and Shmerkin \cite{HS} have proven that $P_{u}\mu$
is exact dimensional, with
\[
\dim P_{u}\mu=\min\{1,\dim\mu\},
\]
for each $u\in S$. A version of this, for self-similar sets with
dense rotations, was first proven by Peres and Shmerkin \cite{PS}.
Considering the continuity of the projections, Shmerkin and Solomyak
\cite{SS} have shown, assuming $\dim\mu>1$, that the set
\[
E=\{u\in S\::\:P_{u}\mu\mbox{ is singular}\}
\]
has zero Hausdorff dimension.

Let us turn to discuss the concept of dimension conservation and the
dimension of slices. A Borel probability measure $\nu$ on $\mathbb{R}^{2}$
is said to be dimension conserving (DC), with respect to the projection
$P_{u}$, if
\[
\dim_{H}\nu=\dim_{H}P_{u}\nu+\dim_{H}\nu_{u,x}\mbox{ for \ensuremath{\nu}-a.e. \ensuremath{x\in\mathbb{R}^{2}}},
\]
where $\dim_{H}$ stands for Hausdorff dimension. It always holds
that $\nu$ is DC with respect to $P_{u}$ for almost every $u\in S$.
This follows from results, valid for general measures, regarding the
typical dimension of projections (see \cite{HK}) and slices (see
\cite{JM}). Falconer and Jin \cite{FJ1} have shown that $\nu$ is
DC, with respect to $P_{u}$ for all $u\in S$, whenever $\nu$ is
self-similar with a finite rotation group. An analogues statement,
for self-similar sets with the SSC, was first proven by Furstenberg
\cite{F}. Another related result for sets is due to Falconer and
Jin \cite{FJ2}. They showed that if $K\subset\mathbb{R}^{2}$ is
self-similar, with $\dim K>1$ and a dense rotation group, then for
every $\epsilon>0$ there exists $N_{\epsilon}\subset S$, with $\dim_{H}N_{\epsilon}=0$,
such that for $u\in S\setminus N_{\epsilon}$ the set
\[
\{x\in\mathrm{span}\{u\}\::\:\dim_{H}(K\cap P_{u}^{-1}\{x\})>\dim K-1-\epsilon\}
\]
has positive length.

Taking these results into account, it is natural to ask whether the
sets $E$, defined above, and
\[
F=\{u\in S\::\:\mu\mbox{ is not DC with respect to \ensuremath{P_{u}}}\},
\]
must be empty whenever the dimension of $\mu$ exceeds $1$. A version,
for self-similar sets, of this folklore question regarding $E$ is
asked in Section 4 of \cite{BFVZ}. The purpose of this paper is to
show that $E$ and $F$ are not necessarily empty, and in fact can
both be topologically large. The following theorem is our main result.
Recall that a measure $\nu$ is said to be discrete if it is supported
on a countable set.
\begin{thm}
\label{T1.1}There exist $r\in(0,1)$, a $2\times2$ rotation matrix
$R$ with $R^{n}\ne Id$ for all $n\ge1$, and a homogeneous planar
self-similar IFS
\[
\{\varphi_{i}(x)=rRx+a_{i}\}_{i\in I}
\]
with the SSC, such that the self-similar measure $\mu=\sum_{i\in I}|I|^{-1}\cdot\varphi_{i}\mu$
satisfies $\dim\mu>1$, and each of the sets
\begin{equation}
\{u\in S\::\:P_{u}\mu\mbox{ is singular}\}\label{E14}
\end{equation}
and
\begin{equation}
\{u\in S\::\:\mu_{u,x}\mbox{ is discrete for \ensuremath{\mu}-a.e. \ensuremath{x\in\mathbb{R}^{2}}}\}\label{E15}
\end{equation}
contains a dense $G_{\delta}$ subset of $S$.
\end{thm}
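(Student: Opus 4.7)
My plan is to exploit the combination of dense rotation orbits (since $R^{n}\neq Id$ for all $n$, the $R$-orbit of any $u\in S$ is dense in $S$) with the Baire category theorem: I will show that both sets \eqref{E14} and \eqref{E15} are $R$-invariant $G_\delta$ subsets of $S$, so the theorem reduces to constructing an IFS for which a single $u_0$ lies in both. The invariance follows directly from homogeneity. Since $P_u\varphi_i(x) = r P_{R^{-1}u}(x) + \langle a_i,u\rangle$, the self-similarity of $\mu$ gives
\[
P_u\mu \;=\; |I|^{-1}\sum_{i\in I} (S_{i,u})_{*} P_{R^{-1}u}\mu, \qquad S_{i,u}(t)=rt+\langle a_i,u\rangle,
\]
presenting $P_u\mu$ as a convex combination of affine pushforwards of $P_{R^{-1}u}\mu$. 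Since each $S_{i,u}$ preserves both absolute continuity and singularity, and a finite convex combination of absolutely continuous (resp.\ singular) measures is absolutely continuous (resp.\ has singular mass equal to the sum of the singular masses), $P_u\mu$ is singular iff $P_{R^{-1}u}\mu$ is; hence \eqref{E14} is $R$-invariant. Disintegrating the same identity yields
\[
\mu_{u,t} \;=\; \sum_{i\in I} |I|^{-1} h_i(t)\,(\varphi_i)_{*} \mu_{R^{-1}u,(t-\langle a_i,u\rangle)/r}
\]
with strictly positive weights $h_i$ satisfying $\sum_i |I|^{-1} h_i\equiv 1$, and discreteness is preserved by affine pushforwards, by finite convex combinations, and --- because the weights are strictly positive --- by each summand; hence \eqref{E15} is also $R$-invariant.

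For the $G_\delta$ structure, weak-$*$ continuity of $u\mapsto P_u\mu$ makes singularity a $G_\delta$ condition on $u$: it is the countable intersection, over $n\in\mathbb{N}$, of the open-in-$u$ conditions ``there exists an open $U\subset\mathbb{R}$ with $|U|<1/n$ and $P_u\mu(U)>1-1/n$''. For \eqref{E15}, discreteness of $\mu_{u,\cdot}$ $P_u\mu$-a.e.\ can be expressed as $\int \liminf_{\delta\to 0}\mu_{u,P_u(x)}(B(x,\delta))\,d\mu(x)=1$, and an upper-semicontinuity argument in $u$ makes the corresponding level set $G_\delta$. Combined with the $R$-invariance and the density of every $R$-orbit on $S$, the proof reduces to producing one IFS where some distinguished $u_0$ belongs to both sets.

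This construction is the main obstacle. One must simultaneously ensure $\dim\mu=\log|I|/\log(1/r)>1$, the planar SSC, and that in some direction $u_0$ the projected one-dimensional self-similar measure has exact overlaps strong enough to force singularity (noting that $\dim P_{u_0}\mu=1$ by \cite{HS}) while the perpendicular geometry collapses slices onto countable sets. A natural approach is to take $r$ as the reciprocal of an algebraic integer (of Pisot or Salem type) and to place the $a_i$ on finitely many lines perpendicular to $u_0$ inside a controlled lattice, so that $\{\langle a_i,u_0\rangle\}$ lies in a proper subgroup of $\mathbb{R}$ adapted to $r$, while the in-plane separation required by SSC and the bound $|I|\cdot r>1$ continue to hold. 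Carrying out this balance explicitly, and verifying that the resulting $u_0$-projection is indeed singular with discrete slices, is the technical heart of the theorem.
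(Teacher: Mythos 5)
Your strategy is genuinely different from the paper's: rather than exhibiting a dense $G_\delta$ directly, you try to reduce everything to exhibiting a single good direction $u_0$, using the observations that both sets are $R$-invariant and that every $R$-orbit on $S$ is dense. The $R$-invariance of (\ref{E14}), via the identity $P_u\mu = |I|^{-1}\sum_i (S_{i,u})_*P_{R^{-1}u}\mu$ and the uniqueness of the Lebesgue decomposition, is correct and is an interesting observation the paper does not use. The $R$-invariance of (\ref{E15}) is also defensible with more care about where the RN weights $g_i$ are positive.

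However, there are several genuine gaps. First, your $G_\delta$ argument for (\ref{E15}) is broken in two places. The identity you write is wrong: $\int \liminf_{\delta\to 0}\mu_{u,P_u(x)}(B(x,\delta))\,d\mu(x) = \int \mu_{u,P_u x}\{x\}\,d\mu(x) = \int \sum_{\xi}\mu_{u,t}\{\xi\}^2\,dP_u\mu(t)$, and this equals $1$ only when $\mu_{u,t}$ is a.e.\ a single point mass, which is much stronger than discreteness. The correct quantity characterizing a.e.\ discreteness would be $\int\sum_\xi\mu_{u,t}\{\xi\}\,dP_u\mu(t) = 1$ (first power, not squared). Independently of this, you assert an ``upper-semicontinuity argument in $u$'' without giving one; since $\{\mu_{u,\cdot}\}$ is a disintegration, defined only up to $P_u\mu$-null modification, there is no obvious pointwise map $u\mapsto\mu_{u,x}$ with any regularity, and it is genuinely unclear that either of these level sets is $G_\delta$ in $u$. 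The paper never needs such a claim because it manufactures the dense $G_\delta$ (as $\cap_n V_n$) by hand from Fourier data, not by a regularity argument.

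Second, the construction — which you yourself flag as ``the technical heart'' — is only sketched and in a way that does not clearly work. You propose taking $r$ to be the reciprocal of a \emph{real} Pisot or Salem number and placing the $a_i$ on a lattice, but this does not supply an irrational rotation $R$ (needed for $R^n\neq Id$) tied to the scaling $r$, and without tying them together the projected iterates $P_{u_0}\varphi_i$ land in $P_{R^{-1}u_0}$ rather than $P_{u_0}$, so $P_{u_0}\mu$ is not a one-dimensional self-similar measure and the classical Erd\H{o}s argument does not directly apply. The paper resolves exactly this tension by taking $rR$ to be multiplication by $\lambda = \theta^{-1}$ for a \emph{complex} Pisot number $\theta$ with $\arg\theta\notin\pi\mathbb{Q}$, and then bounding $|\mathcal{F}(\mu)(4\pi\overline{\theta^N})|$ below via the Pisot property of $\theta$ itself (Proposition \ref{3.2}).

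Third, even granting your reduction, you would still need to show that \emph{some} direction $u_0$ has a.e.\ discrete slices, and you give no argument for this. In the paper this is the hardest step: it uses Wiener's lemma (Theorem \ref{T4.1}) to deduce from the Fourier non-decay that typical slices have an atom, and then a zero-one law for a Bernoulli factor (Lemma \ref{L4.5}) to upgrade ``positive mass on atoms'' to ``full mass on atoms.'' Nothing in your sketch engages with either of these ideas, so the slice part of the theorem is essentially unaddressed.
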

If $\nu$ is a discrete measure on $\mathbb{R}^{2}$ then clearly
$\dim\nu=0$, hence we get the following corollary.
\begin{cor}
Let $\mu$ be the self-similar measure from Theorem \ref{T1.1}, then
the set
\[
\{u\in S\::\:\mu\mbox{ is not DC with respect to \ensuremath{P_{u}}}\}
\]
contains a dense $G_{\delta}$ subset of $S$.
\end{cor}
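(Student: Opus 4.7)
The plan is to extract the non-DC conclusion directly from the second clause of Theorem \ref{T1.1}, namely the fact that the set in (\ref{E15}) contains a dense $G_{\delta}$ subset $G\subset S$. For every $u\in G$, the slices $\mu_{u,x}$ are discrete for $\mu$-a.e.\ $x$, so I would first observe that a discrete measure on $\mathbb{R}^{2}$ is supported on a countable set and therefore has Hausdorff dimension $0$. Consequently $\dim_{H}\mu_{u,x}=0$ for $\mu$-a.e.\ $x\in\mathbb{R}^{2}$.

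Next I would bring in the remaining piece of data from Theorem \ref{T1.1}: $\dim\mu>1$. Since $P_{u}\mu$ is supported on a line, $\dim_{H}P_{u}\mu\le 1$ automatically, so
\[
\dim_{H}P_{u}\mu+\dim_{H}\mu_{u,x}\le 1<\dim\mu \quad\text{for }\mu\text{-a.e.\ }x.
\]
This inequality is incompatible with the defining equality of dimension conservation, so $\mu$ fails to be DC with respect to $P_{u}$ for every $u\in G$. Hence the set in the corollary contains $G$, which is a dense $G_{\delta}$ in $S$.

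There is really no obstacle here: once Theorem \ref{T1.1} is in hand, the corollary is a one-line consequence of the trivial bounds $\dim_{H}\nu=0$ for discrete $\nu$ and $\dim_{H}P_{u}\mu\le 1$. The only point worth being slightly careful about is the interpretation of $\dim_{H}\mu_{u,x}$: I would use the standard convention that $\dim_{H}\nu$ is the infimum of $\dim_{H}A$ over Borel sets $A$ with $\nu(A)>0$ (or, equivalently here, full measure), under which a discrete measure manifestly has dimension $0$. With this in place, the corollary follows immediately from Theorem \ref{T1.1} without any additional construction.
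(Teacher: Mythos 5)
Your argument is exactly the paper's: it notes that a discrete measure has Hausdorff dimension $0$, and combined with $\dim\mu>1$ and $\dim_{H}P_{u}\mu\le1$ for $u$ in the dense $G_{\delta}$ set from (\ref{E15}), dimension conservation fails on that set. No substantive differences.
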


Theorem \ref{T1.1} is related to an example obtained by Nazarov,
Peres and Shmerkin \cite{NPS}. They have presented a planar self-affine
measure $\nu$, with the SSC and having dimension greater than $1$,
such that the set of $u\in S$ for which $P_{u}\nu$ is singular contains
a $G_{\delta}$ subset. We do not pursue this, but our argument can
probably be used for showing that, for a residual set of directions
$u\in S$, the slices $\{\nu_{u,x}\}_{x\in\mathbb{R}^{2}}$ are $\nu$-typically
discrete. Also related to Theorem \ref{T1.1} is a recent paper, by
Simon and Vágó \cite{SV}, in which certain one-parameter families
of self-similar measures $\nu_{\alpha}$ on the line are constructed.
For these families it holds that the similarity dimension of $\nu_{\alpha}$
is greater than $1$ for every $\alpha$, but the set of parameters
for which $\nu_{\alpha}$ is singular is topologically large.

In our construction of $\mu$ the rotational part $rR$, of the maps
in the IFS, comes from a reciprocal of a complex Pisot number (see
definition \ref{D2.1} below). While dealing with parametric families
of measures, Pisot numbers have been used before, in several situations,
in order to demonstrate the existence of exceptional parameters for
which the corresponding measures are singular. This was first done
by Erd\H{o}s \cite{E}, who proved that the Bernoulli convolution
corresponding to $\lambda$, i.e. the distribution of the random sum
$\sum_{n}\pm\lambda^{n}$, is singular whenever $\lambda^{-1}\in(1,2)$
is Pisot. The example from \cite{NPS}, mentioned above, also utilizes
real Pisot numbers. In \cite{SX}, complex Pisot numbers are used
in order to obtain examples of singular complex Bernoulli convolutions.

As a by-product of our construction, we obtain information on the
Hausdorff measure of typical slices of self-similar sets at the critical
dimension. Let $K$ be a planar self-similar set with the SSC, and
denote by $\mathbf{m}$ the Haar measure of $S$. Write $s$ for the
Hausdorff dimension of $K$, and assume $s>1$. For $t\ge0$ denote
by $\mathcal{H}^{t}$ the $t$-dimensional Hausdorff measure. Given
$u\in S$ and $x\in K$ write $K_{u,x}$ for the slice $K\cap(x+\mathrm{span}\{u\})$.
Since $0<\mathcal{H}^{s}(K)<\infty$, the Hausdorff dimension of $K_{u,x}$
is equal to $s-1$, with finite $\mathcal{H}^{s-1}$-measure, for
$\mathcal{H}^{s}\times\mathbf{m}$-a.e. $(x,u)\in K\times S$ (see
Theorem 10.11 in \cite{M}). However, it was not known whether the
set
\[
Q=\{(x,u)\in K\times S\::\:\mathcal{H}^{s-1}(K_{u,x})>0\}
\]
must have positive $\mathcal{H}^{s}\times\mathbf{m}$-measure. In
Corollary 2.3 from \cite{R} the author has shown that if this holds,
and $K$ has dense rotations, then $P_{u}(\mathcal{H}^{s}|_{K})$
is absolutely continuous for all $u\in S$. In our example from Theorem
\ref{T1.1} we shall have $\mu=C\cdot\mathcal{H}^{s}|_{K}$, where
$C>0$ is a normalizing constant. Hence we obtain the following corollary.
\begin{cor}
\label{C1.2}Let $\{\varphi_{i}\}_{i\in I}$ be the IFS constructed
in Theorem \ref{T1.1}. Denote by $K$ its attractor, and write $s$
for the Hausdorff dimension of $K$. Then $s>1$ and, 
\[
\mathcal{H}^{s}\times\mathbf{m}\{(x,u)\in K\times S\::\:\mathcal{H}^{s-1}(K_{u,x})>0\}=0\:.
\]

\end{cor}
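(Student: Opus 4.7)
The plan is to reduce the corollary to a direct application of the absolute-continuity criterion of Corollary 2.3 in \cite{R}, once $\mu$ has been identified with a normalized Hausdorff measure on $K$. I would organize the argument in three short steps.

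First, I would show that $\mu=C\cdot\mathcal{H}^{s}|_{K}$ for some constant $C>0$. Because the IFS is homogeneous with common contraction ratio $r$ and satisfies the SSC, the usual Moran--Hutchinson computation gives $|I|r^{s}=1$ and $0<\mathcal{H}^{s}(K)<\infty$. The normalized restriction $\mathcal{H}^{s}(K)^{-1}\mathcal{H}^{s}|_{K}$ then satisfies the self-similarity relation $\nu=\sum_{i\in I}|I|^{-1}\cdot\varphi_{i}\nu$, and uniqueness of the invariant measure for the uniform-weight system forces it to coincide with $\mu$. In particular, $s=\dim K=\dim\mu>1$ by Theorem \ref{T1.1}, which yields the first conclusion.

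Next, I would produce a single direction $u_{0}\in S$ for which $P_{u_{0}}(\mathcal{H}^{s}|_{K})$ fails to be absolutely continuous. Because $R^{n}\ne Id$ for every $n\ge1$, the subgroup of $SO(2)$ generated by $R$ is dense, so $K$ has dense rotations, and the hypothesis of \cite{R} is satisfied. By Theorem \ref{T1.1}, the set $\{u\in S\::\:P_{u}\mu\mbox{ is singular}\}$ contains a dense $G_{\delta}$ subset of $S$, and in particular is nonempty. Pick any $u_{0}$ in that set; since $\mu$ and $\mathcal{H}^{s}|_{K}$ differ only by the positive multiplicative constant $C$, the projection $P_{u_{0}}(\mathcal{H}^{s}|_{K})$ is also singular, hence not absolutely continuous with respect to Lebesgue measure.

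Finally, I would invoke the contrapositive of Corollary 2.3 from \cite{R}: if $K$ has dense rotations and $\mathcal{H}^{s}\times\mathbf{m}(Q)>0$, then $P_{u}(\mathcal{H}^{s}|_{K})$ is absolutely continuous for every $u\in S$. The direction $u_{0}$ found above violates the conclusion, so the hypothesis must fail, and we obtain $\mathcal{H}^{s}\times\mathbf{m}(Q)=0$ as required. The only step with any real content is the identification $\mu=C\cdot\mathcal{H}^{s}|_{K}$, which depends crucially on the equal weights $p_{i}=|I|^{-1}$ chosen in the statement of Theorem \ref{T1.1}; everything else is bookkeeping, and I do not expect a genuine obstacle.
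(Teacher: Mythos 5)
Your argument is correct and matches the paper's intended proof exactly: identify $\mu$ with $C\cdot\mathcal{H}^{s}|_{K}$ via the equal-weight SSC system, use Theorem \ref{T1.1} to produce a direction with singular projection, and apply the contrapositive of Corollary 2.3 in \cite{R}. No further comment is needed.
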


It is interesting to note that, in contrast with Corollary \ref{C1.2},
if $K\subset\mathbb{R}^{n}$ is self-similar, with the SSC, finite
rotation group and dimension $s$ greater than $2m$, then $\mathcal{H}^{s-m}(K\cap V)>0$
for typical affine $(n-m)$-planes $V\subset\mathbb{R}^{n}$ (see
Corollary 2.2 in \cite{R}).

The rest of the paper is organized as follows. In Section \ref{S2}
the measure $\mu$ from Theorem \ref{T1.1} is constructed. In Section
\ref{S3} we show that the set defined in (\ref{E14}) is residual.
In Section \ref{S4} we complete the proof by establishing this for
the set appearing in (\ref{E15}).$\newline$

\textbf{Acknowledgment. }This paper is a part of the author's PhD
thesis conducted at the Hebrew University of Jerusalem. I would like
to thank my advisor, Michael Hochman, for his support and useful suggestions.
I would also like to thank Or Landesberg for helpful discussions.

\section{\label{S2}Construction of $\mu$}

In this section we carry out the construction of the measure $\mu$
from Theorem \ref{T1.1}. It will be convenient to identify  $\mathbb{R}^{2}$
with the complex plane $\mathbb{C}$. We shall use some simple facts
from the theory of field extensions, for which we refer to chapters
17 and 18 in \cite{I}. Our example involves complex Pisot numbers,
which we now define.
\begin{defn}
\label{D2.1}An algebraic integer $\theta\in\mathbb{C}$ is called
a complex Pisot number if $\theta\notin\mathbb{R}$, $|\theta|>1$,
and all of the Galois conjugates of $\theta$ (i.e. the other roots
of the minimal polynomial of $\theta$), except $\overline{\theta}$,
are less than one in modulus.
\end{defn}

Given algebraic numbers $\alpha_{1},...,\alpha_{n}\in\mathbb{C}$,
we denote by $\mathbb{Q}[\alpha_{1},...,\alpha_{n}]$ the smallest
subfield of $\mathbb{C}$ containing $\alpha_{1},...,\alpha_{n}$.
If $F\subset E$ are subfields of $\mathbb{C}$, we write $[E:F]$
for the degree of the field extension $E/F$. The next lemma is probably
known, but we could not find an appropriate reference. Hence the proof,
which uses a bit of Galois theory, is given at the end of this section.
\begin{lem}
\label{L2}Let $\theta$ be a complex Pisot number with $[\mathbb{Q}[\theta]:\mathbb{Q}]=3$,
then $\arg\theta\notin\pi\mathbb{Q}$.
\end{lem}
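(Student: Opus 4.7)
The plan is to argue by contradiction. Suppose $\arg\theta=(m/n)\pi$ for some integers $m,n$ with $n\ge1$. Then $\theta^{2n}=|\theta|^{2n}$ is a positive real algebraic integer, and I aim to show this is incompatible with the Pisot and degree hypotheses.

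First I identify the Galois conjugates of $\theta$. Since the minimal polynomial of $\theta$ over $\mathbb{Q}$ has degree $3$ and real coefficients, and $\theta\notin\mathbb{R}$, the three roots are $\theta$, $\overline{\theta}$, and a third root $\alpha$ which must be real. The complex Pisot condition then gives $|\alpha|<1<|\theta|=|\overline{\theta}|$.

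Next I squeeze $[\mathbb{Q}[\theta^{2n}]:\mathbb{Q}]$ from both sides. The Galois conjugates of $\theta^{2n}$ are the images of the conjugates of $\theta$ under $x\mapsto x^{2n}$, i.e.\ the set $\{\theta^{2n},\,\overline{\theta}^{2n},\,\alpha^{2n}\}$. Because $\theta^{2n}$ is a positive real, $\overline{\theta}^{2n}=\theta^{2n}$, so there are at most two distinct conjugates, giving $[\mathbb{Q}[\theta^{2n}]:\mathbb{Q}]\le 2$. On the other hand, the tower $\mathbb{Q}\subset\mathbb{Q}[\theta^{2n}]\subset\mathbb{Q}[\theta]$ forces $[\mathbb{Q}[\theta^{2n}]:\mathbb{Q}]$ to divide $[\mathbb{Q}[\theta]:\mathbb{Q}]=3$. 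The only value compatible with both constraints is $1$, so $\theta^{2n}\in\mathbb{Q}$.

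But then every Galois conjugate of $\theta^{2n}$ equals $\theta^{2n}$, which forces $\alpha^{2n}=\theta^{2n}$, and taking $2n$-th roots of the absolute values yields $|\alpha|=|\theta|$, contradicting $|\alpha|<1<|\theta|$. The only real step to get right is the conjugate bookkeeping together with the tower-law divisibility; once the $\le 2$ and $\mid 3$ estimates are both in place, the degree collapses to $1$ and the modulus contradiction is immediate. I do not foresee a substantial obstacle, as the hypothesis $[\mathbb{Q}[\theta]:\mathbb{Q}]=3$ is tailor-made so that the two permissible degrees of $\theta^{2n}$ (namely $1$ and $3$) rule out the nontrivial possibility $2$.
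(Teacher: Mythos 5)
Your proof is correct, and it follows a genuinely different route from the paper's. The paper first pins down the Galois group $G$ of the splitting field over $\mathbb{Q}$: it embeds $G$ into $S_3$, notes $3\mid|G|$ from the degree and $2\mid|G|$ because complex conjugation is an element of order two, concludes $G\cong S_3$, and then argues directly with two explicit generators $\sigma,\tau$ that every element of $G$ must fix $\theta^n$, forcing $\alpha^n=\theta^n$. You bypass the group computation entirely: you squeeze $[\mathbb{Q}[\theta^{2n}]:\mathbb{Q}]$ between the tower-law constraint (it divides $3$) and the real-conjugate-collapse constraint (it is at most $2$), forcing it to be $1$; then $\theta^{2n}\in\mathbb{Q}$ means the conjugate set $\{\theta^{2n},\overline{\theta}^{2n},\alpha^{2n}\}$ is a singleton, giving $\alpha^{2n}=\theta^{2n}$. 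Both proofs end at the same modulus contradiction $|\alpha|<1<|\theta|$. Yours is somewhat more elementary in that it needs only the tower law and the fact that the conjugates of $\theta^{2n}$ are the $2n$-th powers of the conjugates of $\theta$, rather than an explicit determination of the Galois group; the paper's version is more hands-on with group elements and generalizes in a different direction. One small thing worth making explicit if you write this up: the claim that the conjugates of $\theta^{2n}$ are exactly $\{\theta^{2n},\overline{\theta}^{2n},\alpha^{2n}\}$ rests on the splitting field of $\theta$'s minimal polynomial being Galois over $\mathbb{Q}$, so the orbit of $\theta^{2n}$ under its Galois group is obtained by raising the orbit of $\theta$ to the $2n$-th power; you use this implicitly and it is correct, but stating it precisely makes the degree count airtight.
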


Now let $\theta$ be a complex Pisot number such that,
\begin{itemize}
\item $\arg\theta\notin\pi\mathbb{Q}$;
\item $|\theta|$ lies in $(3,4)$;
\item the minimal polynomial of $\theta$ has constant term $1$ or $-1$.
\end{itemize}
For example, the polynomial $f(X)=X^{3}+X^{2}+10X+1$ has three roots,
\[
\theta_{0}\thickapprox-0.45+3.11i,\:\:\overline{\theta_{0}}\thickapprox-0.45-3.11i\:\mbox{ and }\:\alpha\thickapprox-0.1\:.
\]
Since $f$ doesn't have a root in $\mathbb{Z}$, it follows from Gauss's
lemma that $f$ is irreducible over $\mathbb{Q}$. Hence $f$ is the
minimal polynomial of $\theta_{0}$ over $\mathbb{Q}$, and 
\[
[\mathbb{Q}[\theta_{0}]:\mathbb{Q}]=\deg f=3\:.
\]
This shows that $\theta_{0}$ is a complex Pisot number, and from
Lemma \ref{L2} we get that $\arg\theta_{0}\notin\pi\mathbb{Q}$.
Since $3<|\theta_{0}|<4$ and the constant term of $f$ is $1$, the
number $\theta_{0}$ satisfies all of the required properties.

Write $\lambda=\theta^{-1}$ and note that $\lambda$ may be thought
of as a $2\times2$ matrix $rR$, where $r=|\lambda|$ and $R$ is
a planar rotation by angle $\arg\lambda$. From $\arg\lambda\notin\pi\mathbb{Q}$
it follows $R^{n}\ne Id$ for all $n\ge1$. Let $\mathcal{V}$ be
the set of all $(a_{1},a_{2})\in\mathbb{C}^{2}$ for which the IFS
\[
\{z\rightarrow\lambda\cdot z+(-1)^{k}a_{j}\::\:k,j\in\{1,2\}\}
\]
satisfies the strong separation condition (SSC). Since $|\lambda|<\frac{1}{3}$,
it is not hard to see that $(\frac{2}{3},\frac{2i}{3})\in\mathcal{V}$
and in particular that $\mathcal{V}\ne\emptyset$. The next lemma
is proven at the end of this section.
\begin{lem}
\label{L3}The set $\mathcal{Y}:=\{k\cdot\lambda^{l}\::\:k,l\in\mathbb{N}\}$
is dense in $\mathbb{C}$.
\end{lem}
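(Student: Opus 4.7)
My strategy is to exploit the two analytic features of $\lambda=\theta^{-1}$: its modulus $|\lambda|=1/|\theta|\in(1/4,1/3)$ is less than one, and its argument $\arg\lambda=-\arg\theta$ is an irrational multiple of $\pi$. The first fact makes $|\lambda|^{l}\to 0$, providing arbitrarily fine spacing along the ray $\mathbb{R}_{>0}\cdot\lambda^{l}$; the second, combined with Weyl's equidistribution theorem, makes the sequence $(l\arg\lambda \bmod 2\pi)_{l\in\mathbb{N}}$ dense (in fact equidistributed) in $[0,2\pi)$. Together these features let me simultaneously hit any prescribed direction and any prescribed length.

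Given a target $z\in\mathbb{C}$ and $\epsilon>0$, I would construct $k,l\in\mathbb{N}$ with $|k\lambda^{l}-z|<\epsilon$ as follows. The case $z=0$ is immediate (any large $l$ and $k=1$ work), so suppose $z=re^{i\phi}$ with $r>0$. Fix $\delta>0$ small in terms of $\epsilon$ and $r$. By density of the angle sequence, infinitely many $l$ satisfy $|l\arg\lambda-\phi|<\delta\pmod{2\pi}$, and in particular I may choose one such $l$ so large that also $|\lambda|^{l}<\epsilon$. Next let $k\in\mathbb{N}$ be the nearest integer to $r/|\lambda|^{l}$, which is $\geq 1$ once $l$ is large enough, so that $\bigl|k|\lambda|^{l}-r\bigr|\leq |\lambda|^{l}/2$.

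Writing $k\lambda^{l}=k|\lambda|^{l}e^{il\arg\lambda}$ and applying the triangle inequality gives
\[
|k\lambda^{l}-z|\leq\bigl|k|\lambda|^{l}-r\bigr|+r\bigl|e^{il\arg\lambda}-e^{i\phi}\bigr|,
\]
and each summand is controlled by the choice of $l$ and $\delta$, yielding the claimed density of $\mathcal{Y}$. The only possible sticking point is the compatibility of the two constraints on $l$: the bound $|\lambda|^{l}<\epsilon$ demands $l$ large, while $l\arg\lambda$ near $\phi\pmod{2\pi}$ is a congruence condition. Equidistribution settles this at once, since the set of $l$ with $l\arg\lambda$ in any fixed open arc modulo $2\pi$ has positive density and is therefore unbounded, so one can always take $l$ arbitrarily large with the required angular approximation. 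I do not anticipate any other difficulty.
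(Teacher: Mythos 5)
Your argument is correct and is essentially the paper's own: you use $|\lambda|<1$ to make the modular spacing $|\lambda|^{l}$ arbitrarily fine, density of $(l\arg\lambda\bmod 2\pi)$ (coming from $\arg\theta\notin\pi\mathbb{Q}$) to hit the target direction, and then round $r/|\lambda|^{l}$ to an integer $k$; the paper does the same with the floor rather than the nearest integer and invokes only density rather than full Weyl equidistribution, which is all that is needed.
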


Clearly $\mathcal{V}$ is open in $\mathbb{C}^{2}$, hence from $\mathcal{V}\ne\emptyset$
and Lemma \ref{L3} it follows that there exists
\[
(a_{1},a_{2})\in\mathcal{V}\cap(\mathcal{Y}\times\mathcal{Y})\:.
\]
For $k,j\in\{1,2\}$ and $z\in\mathbb{C}$ set
\[
\varphi_{k,j}(z)=\lambda\cdot z+(-1)^{k}\cdot a_{j},
\]
then the IFS
\[
\Phi:=\{\varphi_{k,j}\::\:k,j\in\{1,2\}\}
\]
satisfies the SSC. Denote by $\mathcal{M}(\mathbb{C})$ the collection
of all compactly supported Borel probability measures on $\mathbb{C}$.
Let $\mu$ be the unique member of $\mathcal{M}(\mathbb{C})$ with
\[
\mu=\frac{1}{4}(\varphi_{1,1}\mu+\varphi_{1,2}\mu+\varphi_{2,1}\mu+\varphi_{2,2}\mu),
\]
then
\[
\dim_{H}\mu=\frac{\log4}{\log|\theta|}>1\:.
\]

Denote by $\left\langle \cdot,\cdot\right\rangle $ the Euclidean
inner product on $\mathbb{C}$, i.e. $\left\langle z,w\right\rangle =\mathrm{Re}(z\cdot\overline{w})$
for $z,w\in\mathbb{C}$. Write
\[
S=\{z\in\mathbb{C}\::\:|z|=1\},
\]
and $P_{z}w=\left\langle w,z\right\rangle $ for $z\in S$ and $w\in\mathbb{C}$.
Note that $P_{z}\mu$ is, up to affine equivalence, the pushforward
of $\mu$ by the orthogonal projection onto the line $z\cdot\mathbb{R}$.
The following proposition is proven in Section \ref{S3}.
\begin{prop}
\label{P2.4}There exists a dense $G_{\delta}$ subset $B$ of $S$,
such that $P_{z}\mu$ is singular for all $z\in B$.
\end{prop}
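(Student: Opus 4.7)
The plan is to combine a Pisot-type Fourier non-decay estimate with a Baire category argument, and then to conclude singularity via a pure-type theorem. First I would iterate the self-similarity relation to obtain
\[
\hat{\mu}(\xi) \;=\; \prod_{n=0}^{\infty} \tfrac{1}{2}\bigl(\cos(2\pi\langle \bar{\lambda}^{n}\xi,a_{1}\rangle) + \cos(2\pi\langle \bar{\lambda}^{n}\xi,a_{2}\rangle)\bigr),
\]
and then evaluate $\hat{\mu}$ at the carefully chosen sequence $\xi_{M}:=2k_{1}k_{2}\bar{\theta}^{M}$, where $a_{j}=k_{j}\lambda^{l_{j}}$ comes from $\mathcal{Y}$. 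The directions $z_{M}:=\xi_{M}/|\xi_{M}|=e^{-iM\arg\theta}$ equidistribute in $S$ by Weyl's theorem, since $\arg\theta\notin\pi\mathbb{Q}$; this is the input that will drive the Baire step.

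The central calculation is that $\langle\bar{\lambda}^{n}\xi_{M},a_{j}\rangle = 2k_{1}k_{2}k_{j}\operatorname{Re}(\bar{\theta}^{N})$ with $N:=M-n-l_{j}$. The Galois conjugates of $\theta$ are $\theta,\bar{\theta},\alpha$ with $|\alpha|<1$, and for $N\ge 0$ the identity $2\operatorname{Re}(\bar{\theta}^{N})=\operatorname{Tr}(\theta^{N})-\alpha^{N}$, together with $\operatorname{Tr}(\theta^{N})\in\mathbb{Z}$, yields
\[
\cos\bigl(2\pi\langle\bar{\lambda}^{n}\xi_{M},a_{j}\rangle\bigr) \;=\; \cos(2\pi k_{1}k_{2}k_{j}\alpha^{N}),
\]
which is close to $1$ when $N$ is large, since $|\alpha|<1$. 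In the remaining range $N<0$ I would not use any algebraic identity but simply the direct bound $|\bar{\theta}^{N}|=|\lambda|^{-N}$, which makes the argument itself small and the cosine again close to $1$. Summing two geometric series in $|\alpha|$ and $|\lambda|$ and bounding the finitely many ``transition'' factors by a constant yields a uniform lower bound $|\hat{\mu}(\xi_{M})|\ge c>0$ independent of $M$.

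With $t_{M}:=|\xi_{M}|=2k_{1}k_{2}|\theta|^{M}\to\infty$ in hand, continuity of $\hat{\mu}$ provides an open neighborhood $V_{M}\subset S$ of $z_{M}$ on which $|\hat{\mu}(t_{M}z)|>c/2$. For each $N\ge 1$ the set $U_{N}:=\bigcup_{M\ge N}V_{M}$ is open and dense in $S$ (since $\{z_{M}\}_{M\ge N}$ already is), so $B:=\bigcap_{N\ge 1}U_{N}$ is a dense $G_{\delta}$ by Baire's theorem. For every $z\in B$ one extracts a subsequence $M_{k}\to\infty$ with $|\widehat{P_{z}\mu}(t_{M_{k}})|=|\hat{\mu}(t_{M_{k}}z)|>c/2$, so $\widehat{P_{z}\mu}$ does not vanish at infinity.

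To promote ``not absolutely continuous'' to singular I would invoke the Jessen--Wintner law of pure types. The measure $P_{z}\mu$ is the distribution of the convergent random sum $\sum_{n\ge 1}\langle z,\lambda^{n-1}b_{i_{n}}\rangle$ of independent real-valued summands, each taking at most four values, so $P_{z}\mu$ is an infinite convolution of finitely supported measures on $\mathbb{R}$; the law of pure types forces it to be purely discrete, purely singular continuous, or purely absolutely continuous. The first is excluded because $\dim P_{z}\mu=1$ by Hochman--Shmerkin, and the Riemann--Lebesgue lemma combined with the non-vanishing above excludes the third, so $P_{z}\mu$ is singular for every $z\in B$. The main technical obstacle is the Pisot lower bound: one has to handle simultaneously the ``Pisot-small'' range $N\ge 0$, where integrality of the trace is exploited, and the ``contraction-small'' range $N<0$, where the argument is estimated directly, ensuring that both deviations are geometrically summable with constants independent of $M$ and that the finitely many transition factors do not vanish.
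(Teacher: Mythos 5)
Your proposal follows essentially the same route as the paper: iterate the self-similarity relation to express $\mathcal{F}(\mu)$ as an infinite cosine product, bound it below at frequencies of the form $\mathrm{const}\cdot\overline{\theta}^{M}$ using the integrality of $\theta^{N}+\overline{\theta}^{N}+\alpha^{N}$ (this is exactly the content of the paper's Lemma~\ref{L4} in the degree-$3$ case, applied two-sidedly with geometric decay from both $|\alpha|<1$ and $|\lambda|<1$), then run a Baire category argument over neighborhoods of the dense directions $e^{-iM\arg\theta}$, and conclude via Riemann--Lebesgue together with the Jessen--Wintner law of pure types. The only superfluous move is excluding the purely discrete alternative via Hochman--Shmerkin: since a discrete measure is already singular with respect to Lebesgue measure, the pure-type theorem plus ``not absolutely continuous'' already gives singularity without appealing to the exact dimension of $P_{z}\mu$.
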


Let $\mathcal{B}$ be the Borel $\sigma$-algebra of $\mathbb{C}$.
For $z\in S$ let $\{\mu_{z,w}\}_{w\in\mathbb{C}}\subset\mathcal{M}(\mathbb{C})$
be the disintegration of $\mu$ with respect to $P_{z}^{-1}(\mathcal{B})$,
as defined in Theorem 5.14 in \cite{EW}. This means that $\mu_{z,w}$
is supported on $P_{z}^{-1}(P_{z}w)$ for $w\in\mathbb{C}$, and for
each bounded \emph{$\mathcal{B}$-}measurable $f:\mathbb{C}\rightarrow\mathbb{R}$
\[
\int f\:d\mu_{z,w}=E_{\mu}(f\mid P_{z}^{-1}(\mathcal{B}))(w)\mbox{ for \ensuremath{\mu}-a.e. \ensuremath{w\in\mathbb{C}}\:.}
\]
Here the right hand side is the conditional expectation of $f$ given
$P_{z}^{-1}(\mathcal{B})$ with respect to $\mu$. In order to prove
Theorem \ref{T1.1} it remains to establish the following proposition,
which is done in Section \ref{S4}.
\begin{prop}
\label{P2.5}There exists a dense $G_{\delta}$ subset $B$ of $S$,
such that for each $z\in B$ it holds that $\mu_{z,w}$ is discrete
for $\mu$-a.e. $w\in\mathbb{C}$.$\newline$
\end{prop}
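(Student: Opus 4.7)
The plan is to establish Proposition~\ref{P2.5} by constructing the dense $G_{\delta}$ subset $B\subset S$ directly, in a manner parallel to the argument for Proposition~\ref{P2.4}. The starting point is an atomic reformulation: for each $z\in S$, set
\[
B_{z}:=\{w\in\mathbb{C}\::\:\mu_{z,w}(\{w\})>0\}\:.
\]
A Borel probability measure on a line is discrete if and only if it is concentrated on its set of atoms, so $\mu_{z,w}$ is discrete for $\mu$-a.e.\ $w$ precisely when $\mu(B_{z})=1$. Thus the set in Proposition~\ref{P2.5} equals
\[
\bigcap_{n\ge1}\{z\in S\::\:\mu(B_{z})>1-1/n\},
\]
and the plan reduces to (i) exhibiting a dense $Z_{0}\subset S$ of directions with $\mu(B_{z})=1$, and (ii) showing each of these level sets is open, so that the intersection is a dense $G_{\delta}$.

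For (i), I exploit the arithmetic structure coming from $\theta$. Writing $\pi(\mathbf{i})=\sum_{n\ge1}\lambda^{n-1}(-1)^{k_{n}}a_{j_{n}}$ for the coding map of $\Phi$ and $\pi_{N}(\mathbf{i})$ for its length-$N$ truncation, the inclusion $a_{1},a_{2}\in\mathcal{Y}\subset\mathbb{Z}[\lambda]$ forces each $\pi_{N}(\mathbf{i})$ into $\mathbb{Z}[\lambda]$, which in turn embeds as a rank-$3$ discrete subgroup of $\mathbb{C}\times\mathbb{R}$ under the Minkowski map $\iota(x)=(x,\sigma(x))$, where $\sigma:\mathbb{Q}[\theta]\to\mathbb{R}$ is the Galois embedding sending $\theta$ to the real conjugate $\alpha$ (so $|\sigma(\lambda)|=|\alpha|^{-1}>1$). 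For each nonzero $v\in\mathbb{Z}[\lambda]$ I take $z_{v}=v/|v|\in S$; Lemma~\ref{L3} ensures that the family $\{z_{v}\}$ is dense in $S$. For such a direction the functional $P_{z_{v}}$ interacts with the arithmetic of $\mathbb{Z}[\lambda]$ in a structured way, and the Pisot property $|\alpha|<1$ is used to show that the attractor $K$ meets each fiber $P_{z_{v}}^{-1}(t)$ in a countable set for $P_{z_{v}}\mu$-a.e.\ $t$. Since any measure supported on a countable set is automatically discrete, this yields the required discreteness of the slices in direction $z_{v}$.

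Step (ii) asks for lower semi-continuity of $z\mapsto\mu(B_{z})$, which I plan to obtain by approximating $\mu_{z,w}(\{w\})$ by normalized $\mu$-masses of thin parallelograms centered at $w$ with axes $z$ and $iz$, then combining weak continuity in $z$ of these strip masses with a Lebesgue differentiation step for the disintegration. The principal obstacle lies in (i): propagating the arithmetic discreteness from $\mathbb{Z}[\lambda]$ (where the truncated codings live) to the full attractor (whose points are limits, not elements, of $\mathbb{Z}[\lambda]$). Because $|\sigma(\lambda)|>1$, the Galois conjugate applied to an infinite series $\pi(\mathbf{i})$ diverges, so the propagation cannot be done term-by-term; the required fix is to analyze differences $\pi(\mathbf{i})-\pi(\mathbf{i}')$ via tail factorization, possibly combined with a Fourier-analytic input in the spirit of the complex Bernoulli convolution arguments of \cite{SX}, while carefully tracking the signs $(-1)^{k_{n}}$ and the choices between $a_{1}$ and $a_{2}$. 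A secondary difficulty is the lower semi-continuity in (ii), which uses the SSC and the self-similarity of $\mu$ to control the differentiation uniformly even when $P_{z}\mu$ is itself singular (as guaranteed for a residual set of $z$ by Proposition~\ref{P2.4}).
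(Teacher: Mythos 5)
Your proposal takes a genuinely different route from the paper, and unfortunately it has serious gaps at the two load-bearing points, which you yourself half-acknowledge. The paper's argument has three ingredients you do not use: (a) it shows via Lemmas~\ref{L4.2}--\ref{L4.3} that for $z$ in a dense $G_{\delta}$ set $B$ the lower Fourier bound $|\mathcal{F}(\mu)(4\pi\overline{\theta^k})|>c$ of Proposition~\ref{3.2} can be transferred to the slice measures $\nu_{w}$, giving $\int|\mathcal{F}(\nu_w)(t_n)|^2\,d\mu(w)>c^2$ at points $t_n\in(n,n+1)$; (b) it then feeds this into Wiener's theorem (Theorem~\ref{T4.1}) via a Ces\`aro average to conclude $\mu(E_z)>0$, where $E_z=\{w:\mu_{z,w}\{w\}>0\}$; and (c) it upgrades $\mu(E_z)>0$ to $\mu(E_z)=1$ by a zero--one law, showing $E_z$ belongs to the tail $\sigma$-algebra of the Bernoulli system $(K,T,\mu)$ (Lemma~\ref{L4.5}). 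None of this appears in your plan; in particular you never invoke any zero--one argument, which is what turns ``positive measure set of atomic fibers'' into ``full measure''.

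In your step (i), the assertion that the attractor $K$ meets each fiber $P_{z_v}^{-1}(t)$ in a \emph{countable} set for $P_{z_v}\mu$-a.e.\ $t$ is both unjustified and, in all likelihood, false: since $\dim_H K=s>1$, Mattila-type slicing already gives $\dim_H(K\cap P_u^{-1}(t))=s-1>0$ for a.e.\ direction $u$ and a.e.\ $t$, hence uncountable slice sets; what the paper actually proves is only that the conditional \emph{measure} $\mu_{z,w}$ is discrete, which is a statement about atoms of the disintegration, not about cardinality of the fiber intersection, and the two are not interchangeable here. Moreover, the obstacle you flag yourself is fatal to your arithmetic argument: the relevant Galois embedding $\sigma$ satisfies $|\sigma(\lambda)|>1$, so $\sigma$ does not extend continuously to the limits $\pi(\mathbf{i})$, and the discreteness of Minkowski-embedded $\mathbb{Z}[\lambda]$ gives no control over differences of infinite sums. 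In your step (ii), lower semi-continuity of $z\mapsto\mu(B_z)$ is a delicate, unproved claim (one needs to differentiate $\mu$ along fibers precisely where $P_z\mu$ is singular) and the paper sidesteps it entirely by defining $B$ as $\cap_n V_n$ with $V_n$ a union of explicitly open arcs $J_{n,k}$ determined by the Pisot directions $\overline{\theta^k}/|\theta^k|$, so openness and density are immediate. In short, the central insight here is Fourier-analytic plus ergodic -- Erd\H{o}s-type non-decay at the scales $\theta^k$, pushed to the slices via Wiener, then bootstrapped to full measure by a Bernoulli tail-$\sigma$-algebra -- and that insight is missing from your proposal.
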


\subsection*{Proofs of Lemmas \ref{L2} and \ref{L3}}
\begin{proof}[Proof of Lemma \ref{L2}]
By the assumptions on $\theta$ there exists $\alpha\in\mathbb{C}$,
with $|\alpha|<1$, such that $\overline{\theta}$ and $\alpha$ are
the Galois conjugates of $\theta$. Set $E=\mathbb{Q}[\theta,\overline{\theta},\alpha]$,
let $f\in\mathbb{Q}[X]$ be the minimal polynomial of $\theta$ over
$\mathbb{Q}$, and let $G$ be the Galois group of the field extension
$E/\mathbb{Q}$. Note that $E$ is a splitting field for $f$ over
$\mathbb{Q}$, and that the roots of $f$ are $\theta$, $\overline{\theta}$
and $\alpha$. It follows, by Lemma 18.3 in \cite{I}, that the action
of $G$ on $\{\theta,\overline{\theta},\alpha\}$ induces an isomorphism
from $G$ into a subgroup of $S_{3}$, where $S_{3}$ is the symmetric
group on $3$ letters. It also follows, by Theorem 18.13 in \cite{I},
that the extension $E/\mathbb{Q}$ is Galois. Hence, from Corollary
18.19 and Lemma 17.6 in \cite{I}, we get
\[
|G|=[E:\mathbb{Q}]=[E:\mathbb{Q}(\theta)]\cdot[\mathbb{Q}(\theta):\mathbb{Q}]=[E:\mathbb{Q}(\theta)]\cdot3,
\]
which shows that $3$ divides $|G|$. Let $\sigma\in G$ be with $\sigma(\beta)=\overline{\beta}$
for $\beta\in E$, then $\sigma$ has order $2$. This implies that
$2$ divides $|G|$, and so it must hold that $G$ is isomorphic to
$S_{3}$.

Now assume by contradiction that $\arg\theta\in\pi\mathbb{Q}$, then
$\theta^{n}\in\mathbb{R}$ for some $n\ge1$. Let $\tau\in G$ be
such that $\tau(\theta)=\theta$, $\tau(\overline{\theta})=\alpha$
and $\tau(\alpha)=\overline{\theta}$. Since $\tau$ and $\sigma$
are distinct, both have order $2$, and $G$ is isomorphic to $S_{3}$,
it follows that the group generated by $\tau$ and $\sigma$ is $G$.
Clearly $\tau(\theta^{n})=\theta^{n}$ and from $\theta^{n}\in\mathbb{R}$
we get $\sigma(\theta^{n})=\theta^{n}$, hence $\eta(\theta^{n})=\theta^{n}$
for all $\eta\in G$. Let $\eta\in G$ be with $\eta(\theta)=\alpha$,
then
\[
\theta^{n}=\eta(\theta^{n})=\eta(\theta)^{n}=\alpha^{n}\:.
\]
But we also have $|\theta^{n}|>1>|\alpha^{n}|$, which yields a contradiction,
and so it must holds that $\arg\theta\notin\pi\mathbb{Q}$.
\end{proof}

\begin{proof}[Proof of Lemma \ref{L3}]
Let $z\in\mathbb{C}$ and $\epsilon>0$ be given, and let $N\ge1$
be with $|\lambda^{N}|<\epsilon$. Since
\[
\arg\lambda=-\arg\theta\notin\pi\mathbb{Q},
\]
we have that
\[
\{l\cdot\arg\lambda\mod2\pi\}_{l=N}^{\infty}
\]
is dense in $[0,2\pi)$. It follows there exists $l\ge N$ with
\begin{equation}
|\exp(i\cdot\arg(\lambda^{l}))-\exp(i\cdot\arg z)|<\epsilon\:.\label{E3}
\end{equation}
Let $k\ge0$ be the integer with $k\cdot|\lambda^{l}|\le|z|<(k+1)\cdot|\lambda^{l}|$,
then
\[
\left||z|-|k\cdot\lambda^{l}|\right|\le|\lambda^{N}|<\epsilon\:.
\]
From this, from $\arg(k\cdot\lambda^{l})=\arg(\lambda^{l})$, and
from (\ref{E3}), the lemma follows.
\end{proof}

\section{\label{S3}Proof of Proposition \ref{P2.4}}

Let $\theta$, $\lambda$, $(a_{1},a_{2})$, $\Phi:=\{\varphi_{k,j}\::\:k,j\in\{1,2\}\}$
and $\mu$, as obtained in Section \ref{S2}. We shall show that there
exists a dense $G_{\delta}$ subset $B$ of $S$, such that for every
$z\in B$ the Fourier transform of $P_{z}\mu$ does not decay to $0$
at infinity.
\begin{lem}
\label{L4}There exist constants $\rho\in(0,1)$ and $C>0$, with
\begin{equation}
\mathrm{dist}(2\mathrm{Re}(\theta^{n}),\mathbb{Z})\le C\cdot\rho^{|n|}\:\mbox{ for all }n\in\mathbb{Z}\:.\label{E4}
\end{equation}

\end{lem}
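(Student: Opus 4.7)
The plan is to exploit the decomposition $2\mathrm{Re}(\theta^n) = \theta^n + \overline{\theta}^n$ and to handle the cases $n \ge 0$ and $n < 0$ separately, drawing on the two features of $\theta$ that the hypotheses supply: the existence of a Galois conjugate $\alpha$ with $|\alpha| < 1$, and the fact that the constant term of the minimal polynomial of $\theta$ is $\pm 1$, so that $\lambda = \theta^{-1}$ is also an algebraic integer and $|\lambda| = 1/|\theta| < 1/3$.

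For $n \ge 0$, the key observation is that the power sum
\[
T_n := \theta^n + \overline{\theta}^n + \alpha^n
\]
is a symmetric polynomial in the roots of the minimal polynomial of $\theta$, and hence a rational integer. This follows either from Newton's identities applied to the integer coefficients of the minimal polynomial, or from Galois invariance of $T_n$ in the splitting field, which was shown to be Galois over $\mathbb{Q}$ in the proof of Lemma \ref{L2}; since $T_n$ is a sum of algebraic integers, $T_n \in \mathbb{Z}$. Consequently
\[
\mathrm{dist}\bigl(2\mathrm{Re}(\theta^n), \mathbb{Z}\bigr) \;\le\; \bigl|T_n - 2\mathrm{Re}(\theta^n)\bigr| \;=\; |\alpha|^n,
\]
which is the desired exponential decay at rate $|\alpha|$.

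For $n < 0$, set $m = -n$. The trace trick used above is of no help here, because the conjugate of $\lambda^m$ that is small under $\theta \mapsto \alpha$ is $\alpha^{-m}$, which is \emph{large}. Instead, a direct estimate suffices: since $|\lambda| < 1/3$, one has
\[
\bigl|2\mathrm{Re}(\theta^n)\bigr| \;=\; \bigl|\lambda^m + \overline{\lambda}^m\bigr| \;\le\; 2|\lambda|^m \;<\; 1 \qquad (m \ge 1),
\]
so the nearest integer to $2\mathrm{Re}(\theta^n)$ is $0$ and $\mathrm{dist}(2\mathrm{Re}(\theta^n), \mathbb{Z}) \le 2|\lambda|^{|n|}$.

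Combining the two regimes (with the case $n = 0$ trivial since $2 \in \mathbb{Z}$), inequality (\ref{E4}) follows with $\rho := \max(|\alpha|, |\lambda|) \in (0,1)$ and $C := 2$. I do not foresee any serious obstacle; the point worth emphasising is that the two regimes genuinely rely on different features of the assumptions on $\theta$ — the Pisot condition $|\alpha| < 1$ for $n \ge 0$, and the size bound $|\theta| > 3$ (equivalently $|\lambda| < 1/3$) for $n < 0$.
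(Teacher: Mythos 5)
Your proof is correct and takes essentially the same approach as the paper: for $n \ge 0$ the power sum of all Galois conjugates of $\theta$ is a rational integer and the non-dominant conjugates have modulus less than $1$, while for $n < 0$ one bounds directly $\mathrm{dist}(2\mathrm{Re}(\theta^n),\mathbb{Z}) \le 2|\theta|^n$. Two harmless quibbles: the $n<0$ regime needs only $|\theta|>1$, since $\mathrm{dist}(x,\mathbb{Z})\le|x|$ regardless of whether $|x|<1$, so it does not really rest on $|\theta|>3$; and the paper treats conjugates $\theta_3,\dots,\theta_m$ of arbitrary number whereas you assume a single $\alpha$, which is fine for the cubic example actually constructed but slightly less general.
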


\begin{proof}
Let $\theta_{3},...,\theta_{m}$ be the Galois conjugates of $\theta$
other than $\overline{\theta}$. Since $\theta$ is an algebraic integer,
\[
\theta^{n}+\overline{\theta^{n}}+\sum_{j=3}^{m}\theta_{m}^{n}\in\mathbb{Z}\:\mbox{ for all }n\in\mathbb{N}\:.
\]
From $|\theta_{j}|<1$ for $3\le j\le m$, it follows there exists
$\rho\in(0,1)$ and $C>0$ such that (\ref{E4}) holds for $n\in\mathbb{N}$.
Since $|\theta|>1$ and for each integer $n<0$
\[
\mathrm{dist}(2\mathrm{Re}(\theta^{n}),\mathbb{Z})\le2|\theta|^{n},
\]
the lemma follows.
\end{proof}

Given $\nu\in\mathcal{M}(\mathbb{C})$ let $\mathcal{F}(\nu)$ be
the Fourier transform of $\nu$ as a measure on $\mathbb{R}^{2}$,
i.e. for $\xi\in\mathbb{C}$
\[
\mathcal{F}(\nu)(\xi)=\int_{\mathbb{C}}e^{i\left\langle z,\xi\right\rangle }\:d\nu(z)=\int_{\mathbb{C}}\exp(i\mathrm{Re}(z\cdot\overline{\xi}))\:d\nu(z)\:.
\]
The proof of the following proposition resembles the argument given
by Erd\H{o}s in \cite{E}.
\begin{prop}
\label{3.2}There exists a constant $c>0$ with $|\mathcal{F}(\mu)(4\pi\overline{\theta^{N}})|>c$
for all $N\in\mathbb{N}$.
\end{prop}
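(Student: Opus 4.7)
The plan is to iterate the self-similarity relation $\mu = \frac{1}{4}\sum_{k,j}\varphi_{k,j}\mu$ to get a Riesz-type product formula for $\mathcal{F}(\mu)$, evaluate at $\xi = 4\pi\overline{\theta^{N}}$, and use the Pisot estimate of Lemma~\ref{L4} to show that only finitely many factors in the product deviate appreciably from $1$. Starting from $\mathcal{F}(\varphi_{k,j}\mu)(\xi) = e^{i(-1)^{k}\langle a_{j},\xi\rangle}\mathcal{F}(\mu)(\overline{\lambda}\xi)$ and symmetrizing over $k\in\{1,2\}$ turns each pair of exponentials into a cosine, giving the functional equation
\[
\mathcal{F}(\mu)(\xi) = \mathcal{F}(\mu)(\overline{\lambda}\xi)\cdot \frac{1}{2}\bigl(\cos\langle a_{1},\xi\rangle+\cos\langle a_{2},\xi\rangle\bigr).
\]
Iterating $N$ times collapses the prefactor to $\mathcal{F}(\mu)(\overline{\lambda}^{N}\cdot 4\pi\overline{\theta^{N}}) = \mathcal{F}(\mu)(4\pi)$, a fixed quantity independent of $N$, and writing $a_{j}=k_{j}\lambda^{l_{j}}$ reduces each inner product to $4\pi k_{j}\mathrm{Re}(\theta^{N-n-l_{j}})$.

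Setting $m = N - n$, the task reduces to bounding below, uniformly in $N$, the product $\prod_{m=1}^{N}\frac{1}{2}\bigl(\cos(4\pi k_{1}\mathrm{Re}(\theta^{m-l_{1}})) + \cos(4\pi k_{2}\mathrm{Re}(\theta^{m-l_{2}}))\bigr)$. Lemma~\ref{L4} gives $|2\mathrm{Re}(\theta^{m-l_{j}})-n_{m,j}|\leq C\rho^{|m-l_{j}|}$ for some integer $n_{m,j}$; since $k_{j}\in\mathbb{N}$, this implies $\cos(4\pi k_{j}\mathrm{Re}(\theta^{m-l_{j}})) = 1 + O(\rho^{2|m-l_{j}|})$. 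Hence, outside a fixed bounded window $W$ around $\{l_{1},l_{2}\}$, every factor in the product equals $1 + O(\rho^{2\,\mathrm{dist}(m,\{l_{1},l_{2}\})})$, and summability of these error terms makes the partial product over $m\notin W$ stay within a factor of, say, $\frac{1}{2}$ of $1$ for all $N$.

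For $N$ large enough that $W\subseteq\{1,\ldots,N\}$, the residual product over $m\in W$ is a fixed finite product not depending on $N$; combined with the uniform tail bound and $\mathcal{F}(\mu)(4\pi)$, this gives $|\mathcal{F}(\mu)(4\pi\overline{\theta^{N}})|\geq c$ for all $N\geq N_{0}$, and the finitely many smaller $N$ are absorbed into $c$. The principal obstacle I expect is verifying that the fixed ``core'' product and $\mathcal{F}(\mu)(4\pi)$ are both nonzero: these amount to finitely many algebraic (non-)conditions on $(k_{1},l_{1},k_{2},l_{2},\theta)$, and using the density of $\mathcal{Y}$ (Lemma~\ref{L3}) together with openness of $\mathcal{V}$ one can ensure them by a sufficiently generic choice of $(a_{1},a_{2})\in\mathcal{V}\cap(\mathcal{Y}\times\mathcal{Y})$---refining the selection made in Section~\ref{S2} if necessary.
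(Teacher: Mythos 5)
Your overall strategy matches the paper's proof: expand $\mathcal{F}(\mu)$ as an infinite Riesz-type product of cosine factors, evaluate at $\xi=4\pi\overline{\theta^{N}}$, use $a_j=k_j\lambda^{l_j}$ to rewrite each factor in terms of $\mathrm{Re}(\theta^{m-l_j})$, and invoke the Pisot estimate of Lemma~\ref{L4} so that all but a bounded window of factors are close to $1$, uniformly in $N$. The decomposition into a fixed $\mathcal{F}(\mu)(4\pi)$ times a sliding partial product is equivalent to the paper's bi-infinite product $\prod_{n=-\infty}^{N}b_n$.

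However, there is a genuine gap in how you handle the nonvanishing of the ``core'' factors (and of $\mathcal{F}(\mu)(4\pi)$). You defer this to a genericity argument (``a sufficiently generic choice of $(a_1,a_2)\in\mathcal{V}\cap(\mathcal{Y}\times\mathcal{Y})$''), but you never establish that such a choice exists. This is not a routine openness/density argument: $\mathcal{Y}\times\mathcal{Y}$ is countable, and as $(a_1,a_2)$ ranges over it the data $(k_1,l_1,k_2,l_2)$ change, which changes both the factors $b_n$ and which indices $n$ lie in the ``window'' $W$; a priori it is conceivable that some factor $b_n$ vanishes for every admissible choice. The paper removes this difficulty outright with an algebraic observation you do not make: using the product-to-sum identity, each factor is
\[
b_{n}=\cos\left(2\pi\cdot\mathrm{Re}(k_{1}\theta^{n-l_{1}}+k_{2}\theta^{n-l_{2}})\right)\cdot\cos\left(2\pi\cdot\mathrm{Re}(k_{1}\theta^{n-l_{1}}-k_{2}\theta^{n-l_{2}})\right),
\]
and since the constant term of the minimal polynomial of $\theta$ is $\pm1$, the number $\theta^{-1}$ (hence every $\theta^n$, $n\in\mathbb{Z}$) is an algebraic integer; therefore each argument $2\mathrm{Re}(k_1\theta^{n-l_1}\pm k_2\theta^{n-l_2})$ is an algebraic integer and in particular can never equal $k+\tfrac12$ with $k\in\mathbb{Z}$, so $b_n\neq 0$ for \emph{every} $n\in\mathbb{Z}$ and \emph{every} $(a_1,a_2)\in\mathcal{Y}\times\mathcal{Y}$. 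This is precisely the step for which the hypothesis ``constant term $\pm1$'' was imposed on $\theta$, and it is what your proposal is missing: without it the proof does not close, and with it no genericity is needed.
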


\begin{proof}
Let $X_{1},X_{2},...$ be i.i.d. random variables with,
\[
\mathbb{P}(X_{1}=(-1)^{k}a_{j})=\frac{1}{4}\mbox{ for }k,j\in\{1,2\}\:.
\]
Since $\mu$ is the unique Borel probability measure on $\mathbb{C}$
with
\[
\mu=\frac{1}{4}(\varphi_{1,1}\mu+\varphi_{1,2}\mu+\varphi_{2,1}\mu+\varphi_{2,2}\mu),
\]
it is equal to the distribution of the random sum $\sum_{n=0}^{\infty}\lambda^{n}\cdot X_{n}$.
Hence for every $\xi\in\mathbb{C}$,
\begin{multline*}
\mathcal{F}(\mu)(\xi)=\prod_{n=0}^{\infty}\mathcal{F}(\frac{1}{4}\cdot\sum_{j=1}^{2}(\delta_{\lambda^{n}a_{j}}+\delta_{-\lambda^{n}a_{j}}))(\xi)\\
=\prod_{n=0}^{\infty}\frac{1}{4}\cdot\sum_{j=1}^{2}(\exp(i\mathrm{Re}(\lambda^{n}a_{j}\cdot\overline{\xi}))+\exp(i\mathrm{Re}(-\lambda^{n}a_{j}\cdot\overline{\xi})))\\
=\prod_{n=0}^{\infty}\frac{1}{2}\cdot\left(\cos(\mathrm{Re}(\lambda^{n}a_{1}\cdot\overline{\xi}))+\cos(\mathrm{Re}(\lambda^{n}a_{2}\cdot\overline{\xi}))\right)\:.
\end{multline*}
Since $a_{1},a_{2}\in\mathcal{Y}$, where $\mathcal{Y}$ is defined
in Lemma \ref{L3}, for $j=1,2$ there exist $k_{j},l_{j}\in\mathbb{N}$
with $a_{j}=k_{j}\cdot\theta^{-l_{j}}$. Hence for $N\in\mathbb{N}$,
\begin{multline}
\mathcal{F}(\mu)(4\pi\overline{\theta^{N}})=\prod_{n=-\infty}^{N}\frac{1}{2}\cdot\left(\cos(4\pi\mathrm{Re}(\theta^{n}a_{1}))+\cos(4\pi\mathrm{Re}(\theta^{n}a_{2}))\right)\\
=\prod_{n=-\infty}^{N}\frac{1}{2}\left(\cos(4\pi k_{1}\cdot\mathrm{Re}(\theta^{n-l_{1}}))+\cos(4\pi k_{2}\cdot\mathrm{Re}(\theta^{n-l_{2}}))\right)\:.\label{E100}
\end{multline}

Let us show that $b_{n}\ne0$ for every $n\in\mathbb{Z}$, where
\[
b_{n}:=\frac{1}{2}\left(\cos(4\pi k_{1}\cdot\mathrm{Re}(\theta^{n-l_{1}}))+\cos(4\pi k_{2}\cdot\mathrm{Re}(\theta^{n-l_{2}}))\right)\:.
\]
Recall that the set of algebraic integers is closed under addition,
subtraction and multiplication. The product of $\theta$ with its
Galois conjugates is equal to the constant term of the minimal polynomial
of $\theta$, which is $\pm1$ by assumption. These conjugates are
all algebraic integers, hence $\theta^{-1}$ is an algebraic integer,
and so $\theta^{n}$ is an algebraic integer for all $n\in\mathbb{Z}$.
Let $n\in\mathbb{Z}$, then from the identity
\[
\cos\beta+\cos\gamma=2\cos(\frac{\beta+\gamma}{2})\cos(\frac{\beta-\gamma}{2})\mbox{ for all }\beta,\gamma\in\mathbb{R},
\]
we obtain
\begin{equation}
b_{n}=\cos\left(2\pi\cdot\mathrm{Re}(k_{1}\theta^{n-l_{1}}+k_{2}\theta^{n-l_{2}})\right)\cdot\cos\left(2\pi\cdot\mathrm{Re}(k_{1}\theta^{n-l_{1}}-k_{2}\theta^{n-l_{2}})\right)\:.\label{E1}
\end{equation}
Since $2\mathrm{Re}(k_{1}\theta^{n-l_{1}}+k_{2}\theta^{n-l_{2}})$
is equal to
\[
k_{1}\theta^{n-l_{1}}+k_{2}\theta^{n-l_{2}}+k_{1}\overline{\theta^{n-l_{1}}}+k_{2}\overline{\theta^{n-l_{2}}},
\]
it is an algebraic integer, and so it can't be of the form $k+\frac{1}{2}$
with $k\in\mathbb{Z}$. It follows the first term in the product (\ref{E1})
is nonzero. In a similar manner the second term in (\ref{E1}) is
nonzero, which shows $b_{n}\ne0$.

Fix $n\in\mathbb{Z}$ and $j\in\{1,2\}$, and let $d\in\mathbb{Z}$
be with 
\[
|2\mathrm{Re}(\theta^{n-l_{j}})-d|=\mathrm{dist}(2\mathrm{Re}(\theta^{n-l_{j}}),\mathbb{Z})\:.
\]
Let $C$ and $\rho$ be the constants from Lemma \ref{L4}, and write
\[
C_{0}:=2\pi C\cdot\max\{k_{1},k_{2}\}\cdot\rho^{-\max\{l_{1},l_{2}\}}\:.
\]
From Lemma \ref{L4}, 
\begin{multline*}
|\cos(4\pi k_{j}\cdot\mathrm{Re}(\theta^{n-l_{j}}))-1|=|\cos(4\pi k_{j}\cdot\mathrm{Re}(\theta^{n-l_{j}}))-\cos(2\pi k_{j}d)|\\
\le2\pi k_{j}\cdot|2\mathrm{Re}(\theta^{n-l_{j}})-d|=2\pi k_{j}\cdot\mathrm{dist}(2\mathrm{Re}(\theta^{n-l_{j}}),\mathbb{Z})\\
\le2\pi k_{j}C\cdot\rho^{|n-l_{j}|}\le C_{0}\cdot\rho^{|n|}\:.
\end{multline*}
This shows,
\[
|b_{n}|\ge1-\frac{1}{2}\sum_{j=1}^{2}|\cos(4\pi k_{j}\cdot\mathrm{Re}(\theta^{n-l_{j}}))-1|\ge1-C_{0}\cdot\rho^{|n|}\:.
\]
Now let $M\ge1$ be such that $C_{0}\cdot\rho^{|n|}<1$ for all $n\in\mathbb{Z}$
with $|n|\ge M$. Then from (\ref{E100}) it follows that for each
$N\ge0$,
\begin{multline*}
|\mathcal{F}(\mu)(4\pi\overline{\theta^{N}})|\ge\prod_{n=-\infty}^{-M}|b_{n}|\prod_{n=1-M}^{M-1}|b_{n}|\prod_{n=M}^{\infty}|b_{n}|\\
\ge\prod_{n=M}^{\infty}(1-C_{0}\cdot\rho^{n})^{2}\cdot\prod_{n=1-M}^{M-1}|b_{n}|>0,
\end{multline*}
which completes the proof.
\end{proof}
Let $\mathcal{M}(\mathbb{R})$ be the collection of all compactly
supported Borel probability measures on $\mathbb{R}$. Given $\nu\in\mathcal{M}(\mathbb{R})$
let $\mathcal{F}(\nu)$ be the Fourier transform of $\nu$, i.e.
\[
\mathcal{F}(\nu)(r)=\int_{\mathbb{R}}e^{ixr}\:d\nu(x)\mbox{ for \ensuremath{r\in\mathbb{R}}}.
\]
Recall that
\[
S=\{z\in\mathbb{C}\::\:|z|=1\},
\]
and $P_{z}w=\left\langle w,z\right\rangle $ for $z\in S$ and $w\in\mathbb{C}$.
For $n\ge1$ write
\[
U_{n}=\{z\in S\::\:\underset{r\ge n}{\sup}\:|\mathcal{F}(P_{z}\mu)(r)|>c\},
\]
where $c$ is the constant from Proposition \ref{3.2}.
\begin{lem}
\label{L6}Let $n\ge1$, then $U_{n}$ is an open and dense subset
of $S$.
\end{lem}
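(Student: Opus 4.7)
The plan is to exploit Proposition \ref{3.2} along the particular sequence of frequencies $4\pi\overline{\theta^{N}}$, rewritten in polar form to expose both a radial part (which blows up) and an angular part (which equidistributes on $S$).

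First I would record the basic identity
\[
\mathcal{F}(P_{z}\mu)(r)=\int_{\mathbb{C}}\exp(ir\cdot\mathrm{Re}(w\overline{z}))\,d\mu(w)=\mathcal{F}(\mu)(rz)
\]
for $z\in S$ and $r\in\mathbb{R}$. This converts the study of $U_{n}$ into the study of the set $\{z\in S:\sup_{r\ge n}|\mathcal{F}(\mu)(rz)|>c\}$, which is much easier to handle because $\mathcal{F}(\mu)$ is a fixed continuous function on $\mathbb{C}$.

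Openness is then routine: for each fixed $r\ge n$ the map $z\mapsto\mathcal{F}(\mu)(rz)$ is continuous, so $\{z\in S:|\mathcal{F}(\mu)(rz)|>c\}$ is open, and $U_{n}$ is the union of these sets over $r\ge n$.

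For density, I would write $4\pi\overline{\theta^{N}}=r_{N}z_{N}$ with $r_{N}=4\pi|\theta|^{N}$ and $z_{N}=\exp(-iN\arg\theta)\in S$. By Proposition \ref{3.2}, $|\mathcal{F}(\mu)(r_{N}z_{N})|>c$ for every $N$, equivalently $|\mathcal{F}(P_{z_{N}}\mu)(r_{N})|>c$. Since $|\theta|>1$ we have $r_{N}\to\infty$, so there is $N_{0}$ with $r_{N}\ge n$ for all $N\ge N_{0}$, and hence $z_{N}\in U_{n}$ for all such $N$. The lemma then reduces to showing that $\{z_{N}\}_{N\ge N_{0}}$ is dense in $S$, which follows from $\arg\theta\notin\pi\mathbb{Q}$ (equivalently $\arg\theta/(2\pi)$ is irrational) by Weyl's equidistribution theorem (or simply by density of irrational rotation orbits on the circle).

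The only point that requires any care is the passage from Proposition \ref{3.2}, which gives a lower bound on a specific sequence, to a density statement on $S$: this is precisely where the hypothesis $\arg\theta\notin\pi\mathbb{Q}$, established in Lemma \ref{L2} and built into our choice of $\theta$, becomes essential. Beyond this, the argument is a direct combination of the Fourier identity above with Proposition \ref{3.2} and irrational-rotation density, and no further technical obstacle is expected.
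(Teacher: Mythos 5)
Your proof is correct and follows essentially the same route as the paper: rewrite $\mathcal{F}(P_z\mu)(r)=\mathcal{F}(\mu)(rz)$, get openness from continuity of $\mathcal{F}(\mu)$, and get density by observing that Proposition \ref{3.2} places the tail of the sequence $\exp(-iN\arg\theta)$ in $U_n$, which is dense because $\arg\theta\notin\pi\mathbb{Q}$. The paper's $\eta^k$ is exactly your $z_k$, so there is no substantive difference.
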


\begin{proof}
Note that for $z\in S$ and $r\in\mathbb{R}$
\[
\mathcal{F}(P_{z}\mu)(r)=\int_{\mathbb{R}}\exp(ixr)\:dP_{z}\mu(x)=\int_{\mathbb{R}}\exp(i\left\langle w,rz\right\rangle )\:d\mu(w)=\mathcal{F}(\mu)(rz),
\]
hence
\[
U_{n}=\{z\in S\::\:\underset{r\ge n}{\sup}\:|\mathcal{F}(\mu)(rz)|>c\}\:.
\]
Now since $\mathcal{F}(\mu)$ is continuous it follows $U_{n}$ is
open in $S$. Set $\eta=\exp(-i\arg\theta)$, then from Proposition
\ref{3.2}
\begin{equation}
|\mathcal{F}(P_{\eta^{k}}\mu)(4\pi|\theta|^{k})|=|\mathcal{F}(\mu)(4\pi\overline{\theta^{k}})|>c\label{E2}
\end{equation}
for every integer $k\ge0$. Let $N\ge1$ be with $|4\pi\theta^{N}|\ge n$,
then $\{\eta^{k}\}_{k=N}^{\infty}\subset U_{n}$ by (\ref{E2}). By
assumption $\arg\theta\notin\pi\mathbb{Q}$, hence $\{\eta^{k}\}_{k=N}^{\infty}$
is dense in $S$, which proves the lemma.
\end{proof}
We can now complete the proof of Proposition \ref{P2.4}.
\begin{proof}[Proof of Proposition \ref{P2.4}]
Set $B=\cap_{n=1}^{\infty}U_{n}$, then $B$ is a dense $G_{\delta}$
subset of $S$ by Lemma \ref{L6} and Baire's theorem. Let $z\in B$,
then $\mathcal{F}(P_{z}\mu)(r)$ does not tend to $0$ as $r\rightarrow\infty$.
Hence, by the Riemann-Lebesgue lemma, $P_{z}\mu$ is not absolutely
continuous. From the law of pure types (see Theorem 3.26 in \cite{B})
it now follows $P_{z}\mu$ is singular, which completes the proof
of the Proposition.
\end{proof}

\section{\label{S4}Proof of Proposition \ref{P2.5}}

In order to prove Proposition \ref{P2.5} we shall use the following
theorem due to Wiener (see Section VI.2.12 of \cite{K}).
\begin{thm}
\label{T4.1}For every $\nu\in\mathcal{M}(\mathbb{R})$,
\[
\sum_{x\in\mathbb{R}}\left(\nu\{x\}\right)^{2}=\underset{M\rightarrow\infty}{\lim}\:\frac{1}{2M}\int_{-M}^{M}|\mathcal{F}(\nu)(\xi)|^{2}\:d\xi\:.
\]

\end{thm}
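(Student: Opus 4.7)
The plan is to rewrite $|\mathcal{F}(\nu)(\xi)|^2$ as a double integral against $\nu\times\nu$, swap the order of integration with the outer $\xi$-integral via Fubini, compute the resulting sinc kernel explicitly, and pass to the limit $M\to\infty$ by dominated convergence. The final limiting expression will be the $\nu\times\nu$-mass of the diagonal in $\mathbb{R}^2$, which equals $\sum_x(\nu\{x\})^2$.

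First I would expand
\[
|\mathcal{F}(\nu)(\xi)|^2 = \mathcal{F}(\nu)(\xi)\,\overline{\mathcal{F}(\nu)(\xi)} = \int_{\mathbb{R}^2} e^{i\xi(x-y)}\,d(\nu\times\nu)(x,y).
\]
Since $\nu\times\nu$ is finite and the exponential has modulus $1$, Fubini applies on $[-M,M]\times\mathbb{R}^2$ and gives
\[
\frac{1}{2M}\int_{-M}^{M}|\mathcal{F}(\nu)(\xi)|^2\,d\xi = \int_{\mathbb{R}^2} K_M(x,y)\,d(\nu\times\nu)(x,y),
\]
where a direct computation of the $\xi$-integral shows $K_M(x,y) = \sin(M(x-y))/(M(x-y))$ for $x\ne y$ and $K_M(x,x)=1$.

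Next I would observe that $|K_M(x,y)|\le 1$ uniformly in $M$, and that $K_M(x,y)\to\mathbf{1}_{\{x=y\}}$ pointwise as $M\to\infty$. Because $\nu\times\nu$ is a finite measure, the dominated convergence theorem yields
\[
\lim_{M\to\infty}\frac{1}{2M}\int_{-M}^{M}|\mathcal{F}(\nu)(\xi)|^2\,d\xi = (\nu\times\nu)\bigl(\{(x,y)\in\mathbb{R}^2 : x=y\}\bigr).
\]
Finally, disintegrating $\nu\times\nu$ along the second coordinate,
\[
(\nu\times\nu)\bigl(\{x=y\}\bigr) = \int_{\mathbb{R}}\nu(\{y\})\,d\nu(y) = \sum_{x\in\mathbb{R}}(\nu\{x\})^2,
\]
since $y\mapsto\nu(\{y\})$ is supported on the at most countable atomic part of $\nu$.

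The only step requiring genuine attention is this last identification of the diagonal mass; it relies on the elementary fact that a finite Borel measure on $\mathbb{R}$ has at most countably many atoms, so the non-atomic part of $\nu$ contributes $0$ to $(\nu\times\nu)(\{x=y\})$. The rest is the sinc-kernel computation together with Fubini and bounded convergence, both justified by the finiteness of $\nu$ and the uniform bound $\|K_M\|_\infty\le 1$.
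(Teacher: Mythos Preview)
Your argument is correct and is in fact the standard proof of Wiener's theorem: expand $|\mathcal{F}(\nu)(\xi)|^{2}$ as a double integral, swap the order of integration, evaluate the inner integral to get the sinc kernel, and pass to the limit using dominated convergence together with the bound $|\sin t/t|\le 1$. The identification of the diagonal mass with $\sum_{x}(\nu\{x\})^{2}$ via Fubini and the countability of atoms is also correct.

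There is nothing to compare against here: the paper does not prove Theorem~\ref{T4.1} but simply quotes it from Katznelson, \emph{An Introduction to Harmonic Analysis}, Section~VI.2.12. Your write-up supplies exactly the argument one finds there.
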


Let $\theta$, $\Phi$ and $\mu$ be as in Section \ref{S2}. For
$z\in S$ write $z^{\perp}=e^{-i\pi/2}z$. Given $n,k\in\mathbb{N}$
set
\[
J_{n,k}=\{z\in S\::\:\left\langle 4\pi\overline{\theta^{k}},z^{\perp}\right\rangle \in(n,n+1)\},
\]
and let $V_{n}=\cup_{k\in\mathbb{N}}J_{n,k}$.
\begin{lem}
\label{L4.2}Let $n\in\mathbb{N}$, then $V_{n}$ is a dense open
subset of $S$.
\end{lem}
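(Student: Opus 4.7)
My plan is to dispose of openness by continuity and then reduce density to a Weyl equidistribution statement via an explicit trigonometric identity. The openness is immediate: for each $k$, the map $S\ni z\mapsto\langle 4\pi\overline{\theta^{k}},z^{\perp}\rangle\in\mathbb{R}$ is continuous, so $J_{n,k}$ is the preimage of the open interval $(n,n+1)$, and hence open in $S$; therefore $V_{n}=\bigcup_{k}J_{n,k}$ is open.

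For density, I would first rewrite the defining condition trigonometrically. Parameterizing $z=e^{i\phi}\in S$, so that $z^{\perp}=-iz=e^{i(\phi-\pi/2)}$, and using $\langle w,u\rangle=\mathrm{Re}(w\bar{u})$, a short computation gives
\[
\langle 4\pi\overline{\theta^{k}},(e^{i\phi})^{\perp}\rangle=4\pi|\theta|^{k}\sin(k\arg\theta+\phi).
\]
With $R_{k}:=4\pi|\theta|^{k}$, membership $e^{i\phi}\in J_{n,k}$ is therefore equivalent to $\sin(k\arg\theta+\phi)\in(n/R_{k},(n+1)/R_{k})$, a target interval that shrinks to $0$ as $k\to\infty$.

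Given $z_{0}=e^{i\phi_{0}}\in S$ and $\delta>0$, the plan is to find $k$ and $\phi$ near $\phi_{0}$ realizing that inclusion. Since $\arg\theta\notin\pi\mathbb{Q}$ by the construction of $\theta$ in Section \ref{S2}, Weyl's theorem gives that the tail orbit $\{k\arg\theta\bmod2\pi:k\ge N\}$ is dense in $[0,2\pi)$ for every $N$. I will first choose $N$ large enough that $(n+1)/R_{N}<\sin(\delta/2)$, so that for every $k\ge N$ the target interval lies inside $(-\sin(\delta/2),\sin(\delta/2))$; then, by tail density, I pick $k\ge N$ with $\eta:=(k\arg\theta+\phi_{0})\bmod2\pi\in(-\delta/2,\delta/2)$. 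Setting $\phi=\phi_{0}-\eta+\arcsin\bigl((n+\tfrac{1}{2})/R_{k}\bigr)$ makes $\sin(k\arg\theta+\phi)=(n+\tfrac{1}{2})/R_{k}\in(n/R_{k},(n+1)/R_{k})$, and the triangle inequality bounds $|\phi-\phi_{0}|\le|\eta|+\arcsin((n+\tfrac{1}{2})/R_{k})<\delta/2+\delta/2=\delta$; since $|e^{i\phi}-e^{i\phi_{0}}|\le|\phi-\phi_{0}|$, this puts $e^{i\phi}$ into $J_{n,k}$ and within $\delta$ of $z_{0}$. There is no real obstacle beyond bookkeeping — the only point requiring care is the simultaneous arrangement of the two smallness conditions, narrow target window and $k\arg\theta\bmod2\pi$ close to $-\phi_{0}$, which is possible precisely because the orbit $\{k\arg\theta\bmod2\pi\}$ is dense on every tail.
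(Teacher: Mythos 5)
Your proposal is correct and takes essentially the same approach as the paper: both handle openness by continuity and establish density by exploiting that $\{k\arg\theta \bmod 2\pi\}$ is dense along every tail (since $\arg\theta\notin\pi\mathbb{Q}$) while $|\theta|^{k}\to\infty$, then perturbing the direction slightly so that $\langle 4\pi\overline{\theta^{k}},z^{\perp}\rangle$ lands in $(n,n+1)$. The only cosmetic difference is that where the paper invokes the intermediate value theorem on $f(t)=\langle w,e^{it}\rangle$, you solve for the perturbation explicitly with $\arcsin$.
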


\begin{proof}
Since $J_{n,k}$ is open in $S$ for every $k\in\mathbb{N}$ the same
holds for $V_{n}$. Let $a\in\mathbb{R}$ and $0<\epsilon<1$ be given.
For $E\subset\mathbb{R}$ write $q(E)=E+2\pi\mathbb{Z}$. Since $|\theta|>1$
and $\arg\theta\notin\pi\mathbb{Q}$, there exists $k\in\mathbb{N}$
with
\[
|4\pi\overline{\theta^{k}}|>\frac{n+1}{\cos(\frac{\pi}{2}-\epsilon)}\:\mbox{ and }\:\arg(4\pi\overline{\theta^{k}})\in q(a-\epsilon,a+\epsilon)\:.
\]
Set $w=4\pi\overline{\theta^{k}}$ and for $t\in\mathbb{R}$ write
$f(t)=\left\langle w,e^{it}\right\rangle $. It holds that
\[
f(\arg w^{\perp})=\left\langle w,\frac{w^{\perp}}{|w^{\perp}|}\right\rangle =0,
\]
\begin{multline*}
f(\arg w^{\perp}+\epsilon)=f(\arg w-\frac{\pi}{2}+\epsilon)=\left\langle w,\frac{w}{|w|}\cdot e^{i(\epsilon-\pi/2)}\right\rangle \\
=\mathrm{Re}(w\cdot\frac{\overline{w}}{|w|}\cdot e^{i(\pi/2-\epsilon)})=|w|\cdot\cos(\frac{\pi}{2}-\epsilon)>n+1,
\end{multline*}
and
\[
[\arg w^{\perp},\arg w^{\perp}+\epsilon]\subset q[a-\epsilon-\frac{\pi}{2},a+2\epsilon-\frac{\pi}{2}]\:.
\]
Hence, since $f$ is continuous and $2\pi$-periodic, there exists
$t\in[a-\epsilon-\frac{\pi}{2},a+2\epsilon-\frac{\pi}{2}]$ with $f(t)\in(n,n+1)$.
Set $z=\exp(i(t+\frac{\pi}{2}))$, then
\[
\left\langle w,z^{\perp}\right\rangle =\left\langle w,e^{it}\right\rangle =f(t)\in(n,n+1),
\]
and so $z\in J_{n,k}\subset V_{n}$. Now since $a$ and $\epsilon$
are arbitrary and
\[
\arg z\in q\{t+\frac{\pi}{2}\}\subset q[a-\epsilon,a+2\epsilon],
\]
it follows that $V_{n}$ is dense in $S$, which proves the lemma.
\end{proof}

Set $B=\cap_{n\in\mathbb{N}}V_{n}$, then $B$ is a dense $G_{\delta}$
subset of $S$ by Lemma \ref{L4.2}. Fix $z\in B$ and recall that
$\{\mu_{z,w}\}_{w\in\mathbb{C}}$ is the disintegration of $\mu$
with respect to $P_{z}^{-1}(\mathcal{B})$, where $\mathcal{B}$ is
the Borel $\sigma$-algebra of $\mathbb{C}$. In order to prove the
proposition, it suffices to show that $\mu_{z,w}$ is discrete for
$\mu$-a.e. $w\in\mathbb{C}$. Write
\begin{equation}
\tau_{w}(\xi)=\xi-w\:\mbox{ and }\:R\xi=\overline{z^{\perp}}\cdot\xi\:\mbox{ for }w,\xi\in\mathbb{C},\label{E17}
\end{equation}
for each $w\in\mathbb{C}$ let $\nu_{w}=R\tau_{w}\mu_{z,w}$, and
note that $\nu_{w}\in\mathcal{M}(\mathbb{R})$.
\begin{lem}
\label{L4.3}Let $c>0$ be the constant from Proposition \ref{3.2},
then for each $n\in\mathbb{N}$ there exists $t_{n}\in(n,n+1)$ with
\begin{equation}
\int|\mathcal{F}(\nu_{w})(t_{n})|^{2}\:d\mu(w)>c^{2}\:.\label{E10}
\end{equation}

\end{lem}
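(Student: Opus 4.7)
The plan is to lower-bound $\int|\mathcal{F}(\nu_{w})(t_{n})|^{2}\,d\mu(w)$ by $|\mathcal{F}(\mu)(4\pi\overline{\theta^{k}})|^{2}$ for a suitably chosen $k\in\mathbb{N}$, and then invoke Proposition \ref{3.2}. The choice of $k$ is forced by the hypothesis $z\in B\subset V_{n}$: I would pick any $k$ with $z\in J_{n,k}$ and set
\[
t_{n}:=\langle 4\pi\overline{\theta^{k}},z^{\perp}\rangle\in(n,n+1).
\]

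The key observation is the orthogonal decomposition $4\pi\overline{\theta^{k}}=\alpha z+t_{n}z^{\perp}$ with $\alpha:=\langle 4\pi\overline{\theta^{k}},z\rangle$. This splits the character $\exp(i\langle\cdot,4\pi\overline{\theta^{k}}\rangle)=g\cdot f$ into a $P_{z}^{-1}(\mathcal{B})$-measurable factor $g(\xi):=\exp(i\alpha\langle\xi,z\rangle)$ (which is constant on each fiber of $P_{z}$) and a ``perpendicular'' factor $f(\xi):=\exp(it_{n}\langle\xi,z^{\perp}\rangle)$. The first step is to verify
\[
\mathcal{F}(\nu_{w})(t_{n})=e^{-it_{n}\langle w,z^{\perp}\rangle}\int f\,d\mu_{z,w},
\]
which follows directly from $\nu_{w}=R\tau_{w}\mu_{z,w}$ and the elementary computation that, for $\xi$ on the fiber through $w$, one has $\overline{z^{\perp}}(\xi-w)=\langle\xi-w,z^{\perp}\rangle\in\mathbb{R}$. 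In particular $|\mathcal{F}(\nu_{w})(t_{n})|=|E_{\mu}(f\mid P_{z}^{-1}(\mathcal{B}))(w)|$ for $\mu$-a.e.\ $w$.

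Next, writing $\mathcal{F}(\mu)(4\pi\overline{\theta^{k}})=\int gf\,d\mu$ and applying the defining property of the disintegration, I pull the fiber-constant function $g$ outside the inner integral to obtain
\[
\mathcal{F}(\mu)(4\pi\overline{\theta^{k}})=\int g(w)\,E_{\mu}(f\mid P_{z}^{-1}(\mathcal{B}))(w)\,d\mu(w).
\]
Since $|g|\equiv 1$, Cauchy-Schwarz in $L^{2}(\mu)$ gives
\[
|\mathcal{F}(\mu)(4\pi\overline{\theta^{k}})|^{2}\le\int|E_{\mu}(f\mid P_{z}^{-1}(\mathcal{B}))(w)|^{2}\,d\mu(w)=\int|\mathcal{F}(\nu_{w})(t_{n})|^{2}\,d\mu(w).
\]
Combining with the lower bound $|\mathcal{F}(\mu)(4\pi\overline{\theta^{k}})|>c$ furnished by Proposition \ref{3.2} yields the lemma.

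I do not expect a serious obstacle: the substantive work, namely the non-decay of $\mathcal{F}(\mu)$ along the Pisot subsequence $\{4\pi\overline{\theta^{k}}\}_{k}$, has already been carried out, and the present lemma is just the transfer of that estimate to the slice measures by a single Cauchy-Schwarz. The only point that requires care is recognising that the $z$-component $\alpha z$ of $4\pi\overline{\theta^{k}}$ produces a phase that is constant on each fiber of $P_{z}$; this is precisely what allows Cauchy-Schwarz to reduce the Fourier transform of $\mu$ at $4\pi\overline{\theta^{k}}$ to the average of $|\mathcal{F}(\nu_{w})(t_{n})|^{2}$ without any loss.
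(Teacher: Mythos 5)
Your proof is correct and follows essentially the same path as the paper: choose $k$ with $z\in J_{n,k}$, decompose $4\pi\overline{\theta^{k}}$ into its $z$ and $z^{\perp}$ components, pass the Fourier character through the disintegration, and apply a quadratic-mean inequality to compare $|\mathcal{F}(\mu)(4\pi\overline{\theta^{k}})|^{2}$ with $\int|\mathcal{F}(\nu_{w})(t_{n})|^{2}\,d\mu(w)$. The only cosmetic differences are that the paper writes the orthogonal splitting via the projection $Q_{z^{\perp}}$ rather than an explicit $g\cdot f$ factorization, and uses the triangle inequality followed by Jensen where you use Cauchy--Schwarz; these are equivalent since $|g|\equiv 1$.
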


\begin{proof}
Let $n\in\mathbb{N}$. Since $z\in V_{n}$ there exists $k_{n}\in\mathbb{N}$
and $t_{n}\in(n,n+1)$ with $\left\langle 4\pi\overline{\theta^{k_{n}}},z^{\perp}\right\rangle =t_{n}$.
Write $\eta=4\pi\overline{\theta^{k_{n}}}$, then by Proposition \ref{3.2},
\begin{equation}
c<\left|\mathcal{F}(\mu)(\eta)\right|\le\int\left|\int e^{i\left\langle \xi,\eta\right\rangle }\:d\mu_{z,w}(\xi)\right|\:d\mu(w)\:.\label{E16}
\end{equation}
Let $Q_{z^{\perp}}$ be the orthogonal projection onto $z^{\perp}\mathbb{R}$,
i.e.
\[
Q_{z^{\perp}}\xi=\left\langle \xi,z^{\perp}\right\rangle z^{\perp}\:\mbox{ for \ensuremath{\xi\in\mathbb{C}}\:.}
\]
From (\ref{E16}) and since $\tau_{w}\mu_{z,w}$ is supported on $z^{\perp}\mathbb{R}$
for $w\in\mathbb{C}$,
\begin{multline*}
c<\int\left|\int e^{i\left\langle \xi+w,\eta\right\rangle }\:d\tau_{w}\mu_{z,w}(\xi)\right|\:d\mu(w)\\
=\int\left|e^{i\left\langle w,\eta\right\rangle }\right|\cdot\left|\int e^{i\left\langle \xi,\eta\right\rangle }\:dQ_{z^{\perp}}\tau_{w}\mu_{z,w}(\xi)\right|\:d\mu(w)\\
=\int\left|\int\exp(i\left\langle Q_{z^{\perp}}\xi,\eta\right\rangle )\:d\tau_{w}\mu_{z,w}(\xi)\right|\:d\mu(w)\:.
\end{multline*}
Now since $Q_{z^{\perp}}$ is self-adjoint, $\left\langle \eta,z^{\perp}\right\rangle $
is equal to $t_{n}$, and $R$ from (\ref{E17}) is a rotation,
\begin{multline*}
c<\int\left|\int\exp(i\left\langle \xi,t_{n}z^{\perp}\right\rangle )\:d\tau_{w}\mu_{z,w}(\xi)\right|\:d\mu(w)\\
=\int\left|\int\exp(i\left\langle R\xi,t_{n}\cdot Rz^{\perp}\right\rangle )\:d\tau_{w}\mu_{z,w}(\xi)\right|\:d\mu(w)\\
=\int\left|\int e^{i\xi t_{n}}\:d\nu_{w}(\xi)\right|\:d\mu(w)=\int\left|\mathcal{F}(\nu_{w})(t_{n})\right|\:d\mu(w)\:.
\end{multline*}
From this and Jensen's inequality the lemma follows.
\end{proof}

Let us define the set,
\begin{equation}
E_{z}=\{w\in\mathbb{C}\::\:\mu_{z,w}\{w\}>0\}\:.\label{E11}
\end{equation}

\begin{lem}
\label{L4.4}It holds that $\mu(E_{z})>0$.
\end{lem}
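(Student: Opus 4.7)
The plan is to combine Wiener's identity (Theorem \ref{T4.1}) applied to the real measures $\nu_w$ with the integral lower bound from Lemma \ref{L4.3}, and then to transport the resulting atomic mass back to the set $E_z$ via the disintegration property. The rough outline has three steps: (i) upgrade the pointwise bound from Lemma \ref{L4.3} to a small-interval bound using equi-Lipschitz continuity of $\xi\mapsto\mathcal{F}(\nu_w)(\xi)$; (ii) apply Wiener's theorem together with Fubini and bounded convergence to obtain $\int\sum_{\xi}(\mu_{z,w}\{\xi\})^{2}\,d\mu(w)>0$; (iii) show that every atom of $\mu_{z,w}$ lies in $E_z$ for $\mu$-a.e.\ $w$ and integrate.

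For step (i), since $\mu$ is compactly supported, say in a disk of radius $D_{0}$, each $\nu_{w}$ is supported in $[-D_{0},D_{0}]$ uniformly in $w$, so the family $\{\mathcal{F}(\nu_{w})\}_{w}$ is equi-Lipschitz on $\mathbb{R}$ with constant $D_{0}$. Combined with $|\mathcal{F}(\nu_{w})|\le 1$, this yields
\[
\bigl||\mathcal{F}(\nu_{w})(\xi)|^{2}-|\mathcal{F}(\nu_{w})(t_{n})|^{2}\bigr|\le 2D_{0}\,|\xi-t_{n}|.
\]
By Lemma \ref{L4.3}, I can then fix $\delta\in(0,1/2)$, independent of $n$, so that
\[
\int|\mathcal{F}(\nu_{w})(\xi)|^{2}\,d\mu(w)>\frac{c^{2}}{2}\text{ for all }\xi\in[t_{n}-\delta,t_{n}+\delta],\ n\in\mathbb{N}.
\]

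For step (ii), applying Wiener's theorem to each $\nu_{w}$ and invoking bounded convergence (since $|\mathcal{F}(\nu_{w})|^{2}\le 1$) together with Fubini gives
\[
\int\sum_{x}(\nu_{w}\{x\})^{2}\,d\mu(w)=\lim_{M\to\infty}\frac{1}{2M}\int_{-M}^{M}\int|\mathcal{F}(\nu_{w})(\xi)|^{2}\,d\mu(w)\,d\xi.
\]
Since $\delta<1/2$, the intervals $[t_{n}-\delta,t_{n}+\delta]$ are pairwise disjoint, and step (i) bounds the right-hand side from below by $\delta c^{2}/2>0$. Because $\nu_{w}=R\tau_{w}\mu_{z,w}$ is the image of $\mu_{z,w}$ under an isometry, atoms correspond bijectively with equal masses, giving $\int\sum_{\xi}(\mu_{z,w}\{\xi\})^{2}\,d\mu(w)\ge\delta c^{2}/2>0$.

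For step (iii), the disintegration is essentially constant on fibers, so for $\mu$-a.e.\ $w$, if $\xi$ is an atom of $\mu_{z,w}$ then $\mu_{z,\xi}=\mu_{z,w}$ and hence $\mu_{z,\xi}\{\xi\}=\mu_{z,w}\{\xi\}>0$, i.e.\ $\xi\in E_{z}$. This gives $\mu_{z,w}(E_{z})\ge\sum_{\xi}\mu_{z,w}\{\xi\}\ge\sum_{\xi}(\mu_{z,w}\{\xi\})^{2}$, and integrating against $\mu$ yields $\mu(E_{z})\ge\int\sum_{\xi}(\mu_{z,w}\{\xi\})^{2}\,d\mu(w)>0$. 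The most delicate step is (iii): a priori an atom of $\mu_{z,w}$ need not be a point where the disintegration is defined. The way around this is to observe that for the $\mu$-null set $N$ where the disintegration fails, $0=\mu(N)=\int\mu_{z,w}(N)\,d\mu(w)$, so $\mu_{z,w}(N)=0$ for $\mu$-a.e.\ $w$; atoms of such $\mu_{z,w}$ then automatically avoid $N$, and the fiberwise-constancy $\mu_{z,\xi}=\mu_{z,w}$ applies.
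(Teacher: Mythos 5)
Your argument follows essentially the same route as the paper: use equi-Lipschitz continuity of $\xi\mapsto\mathcal F(\nu_w)(\xi)$ to spread the lower bound of Lemma \ref{L4.3} over small intervals, feed this into Wiener's theorem via Fubini and bounded convergence to get $\int\sum_\xi(\mu_{z,w}\{\xi\})^2\,d\mu(w)>0$, and then transport atoms to $E_z$ using $\mu_{z,\xi}=\mu_{z,w}$. Two remarks, one a small correction and one a comparison.

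First, your claim that for $\delta<1/2$ the intervals $[t_n-\delta,t_n+\delta]$ are pairwise disjoint is false: all that is known is $t_n\in(n,n+1)$, so consecutive $t_n,t_{n+1}$ can be arbitrarily close to $n+1$ and the intervals can overlap. The paper sidesteps this by choosing $A_n\subset(n,n+1)$ (which forces disjointness), whereas your choice does not. This is harmless here, since for $\delta<1/2$ the even-indexed intervals $[t_{2k}-\delta,t_{2k}+\delta]$ are genuinely disjoint (as $|t_{2k}-t_{2k'}|\ge1$), so the union still has Lebesgue measure at least $N\delta$ on $[0,N]$ and the lower bound survives (with a constant such as $\delta c^2/4$ rather than $\delta c^2/2$). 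But as written the step is incorrect and should be repaired by either placing the intervals inside $(n,n+1)$ or by the parity argument.

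Second, your remark about the $\mu$-null exceptional set for the disintegration is a genuine refinement: the paper's proof silently uses $\mu_{z,\xi}=\mu_{z,w}$ for atoms $\xi$ of $\mu_{z,w}$ without commenting on whether the disintegration is defined at $\xi$; your observation that $\mu_{z,w}(N)=0$ for $\mu$-a.e. $w$ (where $N$ is the bad set) closes this gap cleanly. You also correctly write $\sum_x(\nu_w\{x\})^2$ in the Wiener step, where the paper's displayed formula has $\sum_x\nu_w\{x\}$ (a typo, since Theorem \ref{T4.1} yields the sum of squares; the conclusion is unchanged).
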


\begin{proof}
Let $\{t_{n}\}_{n\in\mathbb{N}}$ be the numbers obtained in Lemma
\ref{L4.3}. Since $\mathrm{supp}(\mu)$ is compact and
\[
\mathrm{supp}(\mu_{z,w})\subset\mathrm{supp}(\mu)\mbox{ for \ensuremath{\mu}-a.e. \ensuremath{w\in\mathbb{C}},}
\]
there exists $M>0$ such that $\nu_{w}$ is supported on $[-M,M]$
for $\mu$-a.e. $w\in\mathbb{C}$. It follows that $\mathcal{F}(\nu_{w})$
is $M$-Lipschitz for $\mu$-a.e. $w\in\mathbb{C}$. Hence there exist
$\delta>0$ and intervals $\{A_{n}\}_{n\in\mathbb{N}}$, such that
for every $n\in\mathbb{N}$ it holds
\[
t_{n}\in A_{n}\subset(n,n+1),
\]
$A_{n}$ has length $\delta$, and for $\mu$-a.e. $w\in\mathbb{C}$,
\[
\left|\left|\mathcal{F}(\nu_{w})(t_{n})\right|^{2}-\left|\mathcal{F}(\nu_{w})(x)\right|^{2}\right|<\frac{c^{2}}{2}\mbox{ for }x\in A_{n}\:.
\]
We now get from (\ref{E10}) that for each $N\ge1$,
\begin{multline*}
c^{2}\le\int\frac{1}{N}\sum_{n=0}^{N-1}|\mathcal{F}(\nu_{w})(t_{n})|^{2}\:d\mu(w)\\
=\int\frac{1}{\delta N}\sum_{n=0}^{N-1}\int_{A_{n}}|\mathcal{F}(\nu_{w})(t_{n})|^{2}\:dx\:d\mu(w)\\
\le\int\frac{1}{\delta N}\sum_{n=0}^{N-1}\int_{A_{n}}|\mathcal{F}(\nu_{w})(x)|^{2}+\frac{c^{2}}{2}\:dx\:d\mu(w)\\
\le\int\frac{1}{\delta N}\int_{-N}^{N}|\mathcal{F}(\nu_{w})(x)|^{2}\:dx\:d\mu(w)+\frac{c^{2}}{2},
\end{multline*}
which gives,
\[
\frac{\delta c^{2}}{4}\le\int\frac{1}{2N}\int_{-N}^{N}|\mathcal{F}(\nu_{w})(x)|^{2}\:dx\:d\mu(w)\:.
\]
Now by Theorem \ref{T4.1} and the bounded convergence theorem,
\begin{multline*}
\int\sum_{\xi\in\mathbb{C}}\mu_{z,w}\{\xi\}\:d\mu(w)=\int\sum_{x\in\mathbb{R}}\nu_{w}\{x\}\:d\mu(w)\\
=\int\underset{N\rightarrow\infty}{\lim}\:\frac{1}{2N}\int_{-N}^{N}|\mathcal{F}(\nu_{w})(\xi)|^{2}\:d\xi\:d\mu(w)\\
=\underset{N\rightarrow\infty}{\lim}\:\int\frac{1}{2N}\int_{-N}^{N}|\mathcal{F}(\nu_{w})(\xi)|^{2}\:d\xi\:d\mu(w)\ge\frac{\delta c^{2}}{4}>0\:.
\end{multline*}
This gives $\mu(F_{z})>0$, where
\[
F_{z}=\{w\in\mathbb{C}\::\:\mu_{z,w}\{\xi\}>0\mbox{ for some }\xi\in\mathbb{C}\}\:.
\]
Let $w\in F_{z}$, then there exists
\[
\xi\in\mathrm{supp}(\mu_{z,w})\subset w+z^{\perp}\mathbb{R}
\]
with $\mu_{z,w}\{\xi\}>0$. Since $\mu_{z,\xi}=\mu_{z,w}$ it follows
$\xi\in E_{z}$, where $E_{z}$ is defined in (\ref{E11}), and so
\[
\mu_{z,w}(E_{z})\ge\mu_{z,w}\{\xi\}>0\:.
\]
Now from $\mu(F_{z})>0$ we get
\[
\mu(E_{z})\ge\int_{F_{z}}\mu_{z,w}(E_{z})\:d\mu(w)>0,
\]
which proves the lemma.
\end{proof}

Write $I=\{1,2\}^{2}$ and let $\Phi=\{\varphi_{i}\}_{i\in I}$ be
the IFS constructed in Section \ref{S2}. Recall that $\lambda=\theta^{-1}$,
for each $i\in I$ there exists $a_{i}\in\mathbb{C}$ with $\varphi(w)=\lambda w+a_{i}$
for $w\in\mathbb{C}$, and $\Phi$ satisfies the SSC. Let $K\subset\mathbb{C}$
be attractor of $\Phi$, write $\mathcal{B}_{K}$ for the restriction
of the Borel $\sigma$-algebra $\mathcal{B}$ to $K$, and let $T:K\rightarrow K$
be such that $Tx=\varphi_{i}^{-1}(x)$ for $i\in I$ and $x\in\varphi_{i}(K)$.
\begin{lem}
\label{L4.5}It holds that $\mu(E_{z})=1$.
\end{lem}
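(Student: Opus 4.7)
The plan is to show that $E_z$ is a tail event for the shift map $T$ on $K$, and then to conclude by the Kolmogorov $0$--$1$ law applied to the Bernoulli system $(K,\mathcal{B}_K,\mu,T)$, using $\mu(E_z)>0$ from Lemma \ref{L4.4}. The crux is a cocycle identity relating atomic masses of slices in direction $z^\perp$ to those in direction $(z')^\perp$, where $z':=ze^{-i\arg\lambda}$.

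The geometric input is the identity
\[P_z(\varphi_i w)\;=\;|\lambda|\,P_{z'}(w)+P_z(a_i),\]
which says that $\varphi_i$ maps fibers of $P_{z'}$ bijectively and affinely onto fibers of $P_z$. Combined with the self-similarity relation $\mu|_{\varphi_iK}=\tfrac14\varphi_i\mu$ (from the SSC and the uniform weights) and with the uniqueness of disintegration applied on both sides, one obtains, for every Borel $A$ and $\mu$-a.e.\ $w\in\varphi_iK$,
\[\mu_{z,w}(A\cap\varphi_iK)\;=\;p_i(w)\cdot\mu_{z',Tw}(\varphi_i^{-1}A),\]
where $p_i(w):=\mu_{z,w}(\varphi_iK)$. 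Specializing $A=\{w\}$ yields the cocycle $\mu_{z,w}\{w\}=p_i(w)\cdot\mu_{z',Tw}\{Tw\}$ for $\mu$-a.e.\ $w\in\varphi_iK$. A one-line disintegration argument shows that $p_i>0$ holds $\mu$-a.e.\ on $\varphi_iK$ (the set where $p_i=0$ projects into $\{x:\mu_{z,x}(\varphi_iK)=0\}$, and its $\mu$-mass $\int\mu_{z,x}(\varphi_iK)\,1_{\{\mu_{z,x}(\varphi_iK)=0\}}\,d\pi_*\mu(x)$ vanishes identically). Consequently $w\in E_z\iff Tw\in E_{z'}$ modulo $\mu$-null sets, i.e.\ $E_z=T^{-1}(E_{z'})$ mod null.

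Iterating the same argument with $z$ replaced successively by $z^{(k)}:=ze^{-ik\arg\lambda}$ gives $E_z=T^{-n}(E_{z^{(n)}})$ modulo $\mu$-null sets for every $n\ge 1$, placing $E_z$ in the tail $\sigma$-algebra $\bigcap_{n\ge 1}T^{-n}\mathcal{B}_K$. By the SSC and the uniform weights $\tfrac14$, the coding map identifies $(K,\mathcal{B}_K,\mu,T)$ with the one-sided Bernoulli shift on $I^{\mathbb{N}}$ under the uniform product measure, whose tail $\sigma$-algebra is $\mu$-trivial by Kolmogorov's $0$--$1$ law; thus $\mu(E_z)\in\{0,1\}$, and Lemma \ref{L4.4} excludes the value $0$. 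I expect the main obstacle to be the careful derivation of the cocycle in the first step: one must simultaneously track the rotation of the projection direction produced by $\varphi_i$ and the conditional-probability factor $p_i$ that arises from the mismatch between the disintegration of $\mu|_{\varphi_iK}$ along $P_z$ and the disintegration of $\mu$ along $P_{z'}$. Once the cocycle is in hand, the remainder is routine Bernoulli ergodic theory.
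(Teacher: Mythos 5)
Your proposal is correct and takes essentially the same approach as the paper: both show $E_z$ lies in the tail $\sigma$-algebra $\bigcap_n T^{-n}(\mathcal{B}_K)$ modulo $\mu$ by exploiting that $T$ rotates the projection direction from $z$ to $z^{(1)}=ze^{-i\arg\lambda}$ (i.e.\ from $z_m$ to $z_{m+1}$ in the paper's notation $z_m=\theta^mz/|\theta^mz|$, which coincides with your $z^{(m)}$), and then invoke the Bernoulli $0$--$1$ law together with Lemma~\ref{L4.4}. The only difference is in packaging: the paper derives the relation (\ref{E13}) $F_{m,n}(T^mw)=\mu_{z,w}(K_{\alpha_{m+n}(w)})/\mu_{z,w}(K_{\alpha_m(w)})$ via Feng--Hu's tube-limit formula for conditional measures and identifies $E_z$ as $\{w:\lim_n F_{m,n}(T^mw)>0\}$, whereas you isolate the one-step atom-mass cocycle $\mu_{z,w}\{w\}=p_i(w)\,\mu_{z^{(1)},Tw}\{Tw\}$ with $p_i>0$ a.e.\ and read off $E_z=T^{-1}(E_{z^{(1)}})$ mod $\mu$ directly; these rest on the same geometric fact that $\varphi_i$ carries $P_{z^{(1)}}$-fibers affinely onto $P_z$-fibers and on the same self-similarity relation for $\mu$. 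To make your disintegration step airtight you would still need a tool like the Feng--Hu tube formula (or an equivalent martingale/uniqueness-of-disintegration argument) to justify the identity $\mu_{z,w}(A\cap\varphi_iK)=p_i(w)\,\mu_{z^{(1)},Tw}(\varphi_i^{-1}A)$ for a $\mu$-a.e.\ $w$ independent of $A$, but that is a routine fill-in rather than a gap.
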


\begin{proof}
The system $(K,\mathcal{B}_{K},T,\mu)$ is measure preserving and
isomorphic to a Bernoulli shift. We shall show that
\[
E_{z}\in\cap_{n=0}^{\infty}T^{-n}(\mathcal{B}_{K})\:\mod\mu,
\]
from which the lemma will follow by the zero-one law.

Given a word $i_{1}\cdot...\cdot i_{n}=\alpha\in I^{*}$ write $\varphi_{\alpha}=\varphi_{i_{1}}\circ...\circ\varphi_{i_{n}}$
and $K_{\alpha}=\varphi_{\alpha}(K)$. For $n\in\mathbb{N}$ and $w\in K$
let $\alpha_{n}(w)\in I^{n}$ be the unique word of length $n$ for
which $w\in K_{\alpha_{n}(w)}$, where $\alpha_{0}(w)$ is the empty
word $\emptyset$ and $K_{\emptyset}=K$. For $m,n\in\mathbb{N}$
and $w\in K$ set
\[
z_{m}=\frac{\theta^{m}z}{|\theta^{m}z|}\in S\:\mbox{ and }\:F_{m,n}(w)=\mu_{z_{m},w}(K_{\alpha_{n}(w)})\:.
\]
For $m\in\mathbb{N}$, $w\in\mathbb{C}$ and $\delta>0$ let
\[
V_{w}^{m}(\delta)=w+z_{m}^{\perp}\cdot\mathbb{R}+B(0,\delta),
\]
where $B(0,\delta)$ is the open disk in $\mathbb{C}$ with centre
$0$ and radius $\delta$. From Lemma 3.3 in \cite{FH} we get that
for each $m\in\mathbb{N}$ and $A\in\mathcal{B}$,
\[
\mu_{z_{m},w}(A)=\underset{\delta\downarrow0}{\lim}\:\frac{\mu(V_{w}^{m}(\delta)\cap A)}{\mu(V_{w}^{m}(\delta))}\mbox{ for \ensuremath{\mu}-a.e. \ensuremath{w\in\mathbb{C}}.}
\]

Fix $m,n\in\mathbb{N}$, then for $\mu$-a.e. $w\in K$
\begin{equation}
F_{m,n}(T^{m}w)=\frac{\mu_{z_{m},T^{m}w}(K_{\alpha_{n}(T^{m}w)})}{\mu_{z_{m},T^{m}w}(K_{\alpha_{0}(T^{m}w)})}=\underset{\delta\downarrow0}{\lim}\:\frac{\mu(V_{T^{m}w}^{m}(\delta)\cap K_{\alpha_{n}(T^{m}w)})}{\mu(V_{T^{m}w}^{m}(\delta)\cap K_{\alpha_{0}(T^{m}w)})}\:.\label{E18}
\end{equation}
Since $\mu$ satisfies the SSC,
\[
\varphi_{\alpha}(K)\cap\varphi_{\alpha_{m}(w)}(K)=\emptyset\:\mbox{ for }\alpha\in I^{m}\setminus\{\alpha_{m}(w)\},
\]
hence,
\[
\mu(\varphi_{\alpha}^{-1}(\varphi_{\alpha_{m}(w)}(K))=0\:\mbox{ for }\alpha\in I^{m}\setminus\{\alpha_{m}(w)\}\:.
\]
From this and (\ref{E18}) it follows that for $\mu$-a.e. $w\in K$,
\[
F_{m,n}(T^{m}w)=\underset{\delta\downarrow0}{\lim}\:\frac{\sum_{\alpha\in I^{m}}|I|^{-m}\cdot\mu(\varphi_{\alpha}^{-1}(\varphi_{\alpha_{m}(w)}(V_{T^{m}w}^{m}(\delta)\cap K_{\alpha_{n}(T^{m}w)})))}{\sum_{\alpha\in I^{m}}|I|^{-m}\cdot\mu(\varphi_{\alpha}^{-1}(\varphi_{\alpha_{m}(w)}(V_{T^{m}w}^{m}(\delta)\cap K_{\alpha_{0}(T^{m}w)})))}\:.
\]
Now since $\mu=\sum_{\alpha\in I^{m}}|I|^{-m}\cdot\varphi_{\alpha}\mu$,
\begin{equation}
F_{m,n}(T^{m}w)=\underset{\delta\downarrow0}{\lim}\:\frac{\mu(\varphi_{\alpha_{m}(w)}(V_{T^{m}w}^{m}(\delta)\cap K_{\alpha_{n}(T^{m}w)}))}{\mu(\varphi_{\alpha_{m}(w)}(V_{T^{m}w}^{m}(\delta)\cap K_{\alpha_{0}(T^{m}w)}))}\:\mbox{ for \ensuremath{\mu}-a.e. \ensuremath{w\in K}\:.}\label{E12}
\end{equation}
For every $\delta>0$,
\begin{multline*}
\varphi_{\alpha_{m}(w)}(V_{T^{m}w}^{m}(\delta))=\varphi_{\alpha_{m}(w)}(T^{m}w)+\lambda^{m}z_{m}^{\perp}\cdot\mathbb{R}+\lambda^{m}\cdot B(0,\delta)\\
=w+z_{0}^{\perp}\cdot\mathbb{R}+B(0,|\delta\lambda^{m}|)=V_{w}^{0}(|\delta\lambda^{m}|)\:.
\end{multline*}
Hence from (\ref{E12}) it follows that for $\mu$-a.e. $w\in K$,
\begin{equation}
F_{m,n}(T^{m}w)=\underset{\delta\downarrow0}{\lim}\:\frac{\mu(V_{w}^{0}(|\delta\lambda^{m}|)\cap K_{\alpha_{m+n}(w)})}{\mu(V_{w}^{0}(|\delta\lambda^{m}|)\cap K_{\alpha_{m}(w)}))}=\frac{\mu_{z,w}(K_{\alpha_{m+n}(w)})}{\mu_{z,w}(K_{\alpha_{m}(w)})},\label{E13}
\end{equation}
where we have used the fact that $\mu_{z,w}(K_{\alpha_{m}(w)})>0$
for $\mu$-a.e. $w\in K$.

Let $m\in\mathbb{N}$, then from (\ref{E13}) we get that $\mathrm{mod}\:\mu$
it holds
\begin{multline*}
E_{z}=\{w\in K\::\:\underset{n}{\lim}\:\frac{\mu_{z,w}(K_{\alpha_{m+n}(w)})}{\mu_{z,w}(K_{\alpha_{m}(w)})}>0\}\\
=\{w\in K\::\:\underset{n}{\lim}\:F_{m,n}(T^{m}w)>0\}\in T^{-m}(\mathcal{B}_{K}),
\end{multline*}
which shows,
\[
E_{z}\in\cap_{m\in\mathbb{N}}T^{-m}(\mathcal{B}_{K})\mod\mu\:.
\]
Now since $(K,\mathcal{B}_{K},T,\mu)$ is isomorphic to a Bernoulli
shift, it follows that $\mu(E_{z})=0\mbox{ or }1$. But by Lemma \ref{L4.4}
we have $\mu(E_{z})>0$, which completes the proof of the lemma.
\end{proof}

We can now complete the proof of Proposition \ref{P2.5}.
\begin{proof}[Proof of Proposition \ref{P2.5}]
As mentioned above, it suffices to show $\mu_{z,w}$ is discrete
for $\mu$-a.e. $w\in\mathbb{C}$. By lemma (\ref{L4.5}) we have
$\mu(E_{z})=1$, and so $\mu_{z,w}(E_{z})=1$ for $\mu$-a.e. $w\in\mathbb{C}$.
Fix such a $w\in\mathbb{C}$ and let $A=w+z^{\perp}\mathbb{R}$. Since
$\mu_{z,w}(A)=1$ and $\mu_{z,w}=\mu_{z,\xi}$ for $\xi\in A$,
\[
1=\mu_{z,w}(E_{z}\cap A)=\mu_{z,w}\{\xi\in A\::\:\mu_{z,w}\{\xi\}>0\}\:.
\]
This shows that $\mu_{z,w}$ is discrete, which completes the proof.
\end{proof}

\end{document}